\newtheorem{thm}{Theorem}[section]
\newtheorem{lemma}[thm]{Lemma}
\newtheorem{theorem}[thm]{Theorem}
\newtheorem{proposition}[thm]{Proposition}
\newtheorem{corollary}[thm]{Corollary}
\theoremstyle{definition}
\newtheorem{note}[thm]{Note}
\newtheorem{definition}[thm]{Definition}
\newtheorem{remark}[thm]{Remark}
\newtheorem{mydef}[thm]{Definition}
\newtheorem*{UnNot}{(Unusual) Choice of notation}
\numberwithin{equation}{section}
\theoremstyle{remark}
\numberwithin{table}{section}
\def\Z{\ifmmode{{\mathbb Z}}\else{${\mathbb Z}$}\fi}
\def\F{\ifmmode{{\mathbb F}}\else{${\mathbb F}$}\fi}
\def\Q{\ifmmode{{\mathbb Q}}\else{${\mathbb Q}$}\fi}
\def\C{\ifmmode{{\mathbb C}}\else{${\mathbb C}$}\fi}
\def\P{\ifmmode{{\mathbb P}}\else{${\mathbb P}$}\fi}
\def\H{\ifmmode{{\mathrm H}}\else{${\mathrm H}$}\fi}
\def\G{\ifmmode{{\mathbb G}}\else{${\mathbb G}$}\fi}
\def\R{\ifmmode{{\mathbb R}}\else{${\mathbb R}$}\fi}
\def\F{\ifmmode{{\mathbb F}}\else{${\mathbb F}$}\fi}
\def\N{\ifmmode{{\mathbb N}}\else{${\mathbb N}$}\fi}
\def\O{\ifmmode{{\calO}}\else{${\calO}$}\fi}
\def\D{\ifmmode{{\cal{D}}^b}\else{${{\cal{D}}^b}$}\fi}
\newcommand{\calO}{{\mathcal O}}
\newcommand{\restricts}[2]{#1|_{#2}}
\DeclareMathOperator{\Char}{char}
\DeclareMathOperator{\Bad}{Bad}
\DeclareMathOperator{\Gal}{Gal}
\DeclareMathOperator{\Val}{Val}
\DeclareMathOperator{\Pic}{Pic}
\DeclareMathOperator{\Spec}{Spec}
\DeclareMathOperator{\DVR}{DVR}
\DeclareMathOperator{\Frac}{Frac}
\DeclareMathOperator{\id}{id}
\DeclareMathOperator{\Br}{Br}
\providecommand{\Pic}{\mathrm{Pic}}
\let\originalleft\left
\let\originalright\right
\renewcommand{\left}{\mathopen{}\mathclose\bgroup\originalleft}
\renewcommand{\right}{\aftergroup\egroup\originalright}
\title{Weak approximation for del Pezzo surfaces of low degree}
\subjclass[2020]{14G05, 14G12 (11G35).}
\author{Julian Demeio}
\address{Julian Demeio \\
Max Planck Institute for Mathematics \\
Vivatsgasse 7 \\
53111 Bonn \\
Germany.}
\email{demeio@mpim-bonn.mpg.de}
\author{Sam Streeter}
\address{Sam Streeter \\
School of Mathematics \\
University of Bristol \\
Woodland Road \\
Bristol \\
BS1 8UG \\
UK.}
\email{sam.streeter@bristol.ac.uk}
\begin{document}
\begin{abstract}
We prove, via an ``arithmetic surjectivity'' approach inspired by work of Denef, that weak weak approximation holds for surfaces with two conic fibrations satisfying a general assumption. In particular, weak weak approximation holds for general del Pezzo surfaces of degrees $1$ or $2$ with a conic fibration.
\end{abstract}
\maketitle
\setcounter{tocdepth}{1}
\tableofcontents
\section{Introduction}
Questions regarding rational points on del Pezzo surfaces (smooth projective surfaces $X$ with ample anticanonical divisor $-K_X$) are of long-standing interest in Diophantine geometry. Two guiding principles are that these surfaces should have many rational points (if they have any) and that their complexity should increase as the degree $d := K_X^2 \in \{1,\dots,9\}$ decreases. As such, the cases $d=1,2$ are thought of as the most challenging, and they will be the focus of this paper.

One measure of the geometric abundance and distribution of rational points for varieties over a number field is \emph{weak approximation} (Definition \ref{def:WA}). Kresch and Tschinkel \cite{KT} and V\'arilly-Alvarado \cite{VAR} have provided counterexamples to weak approximation for del Pezzo surfaces of degrees $2$ and $1$ respectively. However, it is conjectured by Colliot-Th\'el\`ene that any smooth unirational variety satisfies \emph{weak weak approximation}, or weak approximation away from finitely many places, and all del Pezzo surfaces with a rational point are expected to be unirational.

We shall prove that del Pezzo surfaces with a rational point over a number field satisfy weak weak approximation, provided we assume the additional structure of two conic fibrations. While this requirement may seem demanding, any conic fibration on a del Pezzo surface of degree $1,2$ or $4$ gives rise to another (Lemma \ref{isklem}). Further, Iskovskih \cite{ISK} showed that any geometrically rational surface is birational to either a del Pezzo surface or a conic bundle, so one may think of del Pezzo surfaces with conic fibrations as a special ``intersectional'' case towards the proof of weak weak approximation for all geometrically rational surfaces. Indeed, degree-$d$ del Pezzo surfaces with a rational point are rational (and so satisfy the stronger property of weak approximation) if $d \geq 5$, while weak weak approximation follows from work of Salberger and Skorobogatov \cite{SS} and Swinnerton-Dyer \cite{SD} for the cases $d=4$ and $d=3$ respectively. For surfaces $X$ with a conic fibration, the arithmetic complexity is notionally controlled by the number of singular fibers. Indeed, if $\pi: X \rightarrow \mathbb{P}^1$ is a conic fibration with $r$ singular fibers, then one obtains (by blowing down components of singular fibers over the algebraic closure) the formula $K_X^2 = 8 - r$, hence $X$ is a del Pezzo surface of degree $8-r$ when $r \leq 7$. Work of Koll\'ar and Mella \cite{KM} shows that, for $r \leq 7$, the surface $X$ is unirational as soon as it possesses a rational point. The arithmetic of conic bundles (surfaces with conic fibrations) is connected to \emph{Schinzel's hypothesis}, the verification of which is one of the foremost open problems in number theory. In particular, assuming the truth of Schinzel's hypothesis implies that the Brauer--Manin obstruction is the only one to weak approximation for surfaces with a conic fibration \cite[Thm.~14.2.5]{BGbook}. 
The finiteness of the quotient $\Br X/\Br K$ (which follows from \cite[Prop.\ 11.3.4]{BGbook}) then implies that weak weak approximation holds as soon as the surface possesses a rational point by \cite[Lem.\ 13.3.13]{BGbook}.
For details on the connection between Schinzel's hypothesis and rational points, we refer the reader to \cite[\S14.2]{BGbook}. 
\subsection{Results}
\begin{theorem} \label{mainthm}
Let $X$ be a smooth projective surface over a number field $K$ with two distinct conic fibrations $\pi_i: X \rightarrow \mathbb{P}^1$, $i=1,2$. Suppose that $X\left(K\right) \neq \emptyset$ and that the following conditions hold.
\begin{enumerate}[label=(\alph*)]
\item Fibers from distinct fibrations have non-zero intersection product.
\item Singular fibers from distinct fibrations do not share a singular point.
\end{enumerate}
Then $X$ satisfies weak weak approximation over $K$.
\end{theorem}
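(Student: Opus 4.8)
The plan is to follow the ``arithmetic surjectivity'' strategy of Denef, as advertised in the abstract. Let me sketch how this should go. Since $X$ has two conic fibrations $\pi_1,\pi_2$, the product map $\pi = (\pi_1,\pi_2): X \to \mathbb{P}^1 \times \mathbb{P}^1$ is a dominant morphism of surfaces, which by condition (a) is generically finite (fibers of $\pi_1$ and $\pi_2$ meet in a nonzero number of points, so a general fiber of $\pi$ is finite). In fact, since a general fiber of $\pi_1$ is a conic meeting a general fiber of $\pi_2$ transversally in, generically, two points, one expects $\pi$ to be generically $2$-to-$1$; but what matters is only that $\pi$ is dominant and generically finite. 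The idea is then to reduce weak weak approximation for $X$ to a statement about the fibers of the conic fibrations and to exploit the fact that a smooth conic over a field with a rational point is rational (isomorphic to $\mathbb{P}^1$), so weak approximation holds along such fibers.

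The key steps I would carry out, in order, are as follows. \emph{Step 1: local solubility on an open set.} Fix a finite set $S$ of places of $K$ and a collection of local points $(x_v)_{v \in S} \in \prod_{v \in S} X(K_v)$ that we wish to approximate (outside a fixed, to-be-enlarged finite set). After shrinking we may assume each $x_v$ lies on a smooth fiber of both $\pi_1$ and $\pi_2$ and avoids the (finitely many) ``bad'' fibers; here condition (b) is what guarantees that the locus where something degenerates for $\pi_1$ and $\pi_2$ simultaneously is small (the singular fibers of the two fibrations are in ``general position'' relative to one another, so the closed subset of $X$ we must avoid is a finite set of points, not a curve). \emph{Step 2: Denef-style arithmetic surjectivity.} Consider $\pi_1: X \to \mathbb{P}^1$. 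The generic fiber $X_\eta$ is a smooth conic over $K(t)$; having a rational point (from $X(K) \neq \emptyset$, appropriately placed, or via a version of the Lang--Nishimura / fibration argument) it is $\mathbb{P}^1_{K(t)}$, so $\pi_1$ admits a rational section, and more importantly $X \to \mathbb{P}^1$ is a conic bundle whose smooth locus is, over a dense open $U_1 \subseteq \mathbb{P}^1$, a $\mathbb{P}^1$-bundle. The arithmetic surjectivity input (à la Denef's proof of the Ax--Kochen philosophy applied to weak approximation, cf.\ the BGbook discussion) says: the map $X(K_v) \to U_1(K_v)$ is surjective for all but finitely many $v$, and is ``locally surjective'' at the remaining $v$ in the strong sense needed to transport approximation. \emph{Step 3: bootstrap via the second fibration.} Approximating $x_v$ in $X$ is reduced, via $\pi_1$, to approximating $\pi_1(x_v)$ in $\mathbb{P}^1$ together with approximating within the fiber; weak approximation holds trivially on $\mathbb{P}^1$, and on the fiber (a smooth conic with a point) it holds too — but the fibral points we produce must be made to vary compatibly with $\pi_2$, i.e.\ we must also hit the prescribed $\pi_2$-values. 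Here we use that $\pi_2$ restricted to a smooth fiber $F$ of $\pi_1$ is a finite morphism $F \cong \mathbb{P}^1 \to \mathbb{P}^1$ of degree equal to the intersection number $F \cdot (\text{fiber of } \pi_2) \geq 1$ from condition (a); a finite surjective morphism $\mathbb{P}^1 \to \mathbb{P}^1$ with a suitable rational point is arithmetically surjective away from finitely many places (this is again the Denef-type input, now in the simplest one-variable case, essentially a Hensel-lifting statement). Combining: we choose first the $\pi_1$-coordinate to approximate $\pi_1(x_v)$, then within the fiber use arithmetic surjectivity of $\pi_2|_F$ to simultaneously approximate $\pi_2(x_v)$, and the Brauer--Manin/finiteness machinery of \cite{BGbook} is not needed because conics with points are rational — we get honest weak weak approximation.

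\emph{Main obstacle.} I expect the hard part to be Step 2--3, specifically making the ``arithmetic surjectivity'' uniform and quantitative enough: one needs that for all but finitely many places $v$, the reduction of the relevant fibration has a smooth $\mathbb{F}_v$-point lying over any prescribed (approximate) point downstairs, so that Hensel's lemma applies; this requires control of the discriminant locus of the conic bundles and of the branch locus of $\pi_2|_F$ as $F$ varies, and it is exactly here that conditions (a) and (b) are used to prevent the two sources of bad reduction from conspiring. A secondary subtlety is the placement of the known rational point $P \in X(K)$: to start the induction one wants $P$ (or a rational point produced from it) to lie on smooth fibers of both fibrations and away from the finitely many excluded points, which one arranges by first moving $P$ using weak approximation on an auxiliary rational curve through $P$, or by enlarging the exceptional set $S$ of places. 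Once these uniformity statements are in place, the combination of weak approximation on $\mathbb{P}^1$ (for the base directions) with Hensel lifting in the fibers yields the theorem.
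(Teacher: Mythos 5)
Your proposal does not follow the paper's route, and it rests on two claims that are false, so the argument as written collapses. First, in Step 2 you assert that since $X(K)\neq\emptyset$ the generic fiber of $\pi_1$ is $\mathbb{P}^1_{K(t)}$, so that $\pi_1$ admits a rational section and is generically a $\mathbb{P}^1$-bundle over a dense open of the base. A single rational point of $X$ lies on one fiber and gives no section (nor does any Lang--Nishimura argument); if the generic fiber really were trivial, $X$ would be $K$-rational and the theorem would be empty --- the whole difficulty, e.g.\ for minimal del Pezzo surfaces of degree $1$ or $2$, is precisely that the generic conic has no $K(t)$-point. Second, and fatally, in Step 3 you claim that a finite surjective morphism $\mathbb{P}^1\to\mathbb{P}^1$ (namely $\pi_2|_F$, whose degree is the intersection number from condition (a)) is arithmetically surjective away from finitely many places ``by Hensel lifting''. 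This is false whenever the degree exceeds $1$: by \cite[Prop.~3.5.2]{SER} (essentially Chebotarev), such a map fails to surject on $K_v$-points for a positive density of places $v$, because its reduction does not hit all of $\mathbb{P}^1(\mathbb{F}_v)$, so there is nothing for Hensel's lemma to lift. Your scheme of ``first approximate the $\pi_1$-coordinate, then hit the prescribed $\pi_2$-value inside the fiber'' is exactly the two-step construction $C'_2\to X$, which is generically finite of degree $\pi_1^{-1}(t_1)\cdot\pi_2^{-1}(t_2)>1$ and therefore cannot be arithmetically surjective --- this is spelled out in the paper as the reason the na\"ive approach fails.

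What is actually done is to build, from a point $P_0$ on a smooth fiber of $\pi_1$, a tower of iterated fiber products $C_n$ alternating the two fibrations, so that the fibers of $f_n\colon C_n\to X$ become positive-dimensional rather than finite; one proves each $C_n$ is a projective \emph{rational} variety, and then shows that for $n=5$ a desingularization $f'_5\colon C'_5\to X$ has geometrically integral generic fiber and split reduction at \emph{every} divisorial valuation of $K(X)$, i.e.\ split fibers in codimension one on every birational modification $\widetilde{C'_5}\to\widetilde{X}$ --- this is the hypothesis of Denef's theorem and the genuine technical heart. Condition (b) enters there, not where you place it: it says $\Bad(\pi_1)\cap\Bad(\pi_2)=\emptyset$, so every divisorial valuation is centered away from the bad locus of at least one fibration and split reduction can be verified using that fibration (and one extra iteration carries it over to the other side). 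Weak weak approximation then follows because $C'_5$ is smooth and rational, hence satisfies weak approximation, and almost all local points of $X$ lift to $C'_5(K_v)$. Your proposal is missing this entire mechanism --- the auxiliary varieties, the notion of split reduction over all models, and why five iterations suffice --- and the shortcut offered in its place cannot be repaired without it.
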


\begin{note}
Our result does not apply only to del Pezzo surfaces. However, when $X$ is \emph{minimal} and satisfies the hypotheses of Theorem \ref{mainthm}, it is necessarily a del Pezzo surface. Indeed, any minimal rational surface with two conic fibrations satisfies $K_X^2>0$ by \cite[Thm.~1.6]{ISK70} and then it is del Pezzo by \cite[Thm.~5]{ISK} (the $N$th scroll $F_N$ possesses, even geometrically, only one conic fibration for $N\geq 2$).
\end{note}

\begin{remark}
Since a conic fibration on a del Pezzo surface of degree $d \in \{1,2,4\}$ induces another (Lemma \ref{isklem}), we may think of Theorem \ref{mainthm} as holding for general del Pezzo surfaces of degrees $1$ or $2$ with a conic fibration (see Remark \ref{GBsp}).
\end{remark}

\begin{remark}
By \cite[Prop.~V.1.4]{HAR} and \cite[Rem.~II.7.8.1]{HAR}, the first condition is equivalent to the two conic fibrations not being equal up to an automorphism of $\mathbb{P}^1$, hence one may think of it as ensuring that the two fibrations are truly distinct.
\end{remark}

We deduce the following result for del Pezzo surfaces of degrees $1$ or $2$.
\begin{corollary}
Let $X$ be a del Pezzo surface of degree $1$ or $2$ with a conic fibration and a rational point. If there exists no $4$-Eckardt point in $X({\overline{K}})$, then $X$ satisfies weak weak approximation.
\end{corollary}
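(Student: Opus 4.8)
The plan is to produce, out of the given conic fibration, a second conic fibration that meets conditions~(a) and~(b) of Theorem~\ref{mainthm}, and then to invoke that theorem.

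Write $\pi_1\colon X\to\mathbb{P}^1$ for the given conic fibration and $f_1\in\Pic(X)$ for its fiber class, so that $f_1^2=0$ and $-K_X\cdot f_1=2$ by adjunction, and put $c:=4/K_X^2\in\{2,4\}$. By Lemma~\ref{isklem} the surface $X$ carries a second conic fibration $\pi_2\colon X\to\mathbb{P}^1$, which one can realise as $\pi_1\circ\sigma$, where $\sigma$ is the Geiser involution if $K_X^2=2$ and the Bertini involution if $K_X^2=1$; its fiber class is then $f_2=\sigma^{*}f_1=-cK_X-f_1$. As $\sigma$ is defined over $K$, so is $\pi_2$. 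A short computation in $\Pic(X)$ gives $f_2^2=c^2K_X^2-4c=0$, $-K_X\cdot f_2=cK_X^2-2=2$, and
\[
f_1\cdot f_2=-c\,(K_X\cdot f_1)-f_1^2=2c>0,
\]
so in particular $f_1\neq f_2$: the two fibrations are genuinely distinct, and condition~(a) holds.

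For condition~(b), recall that over $\overline{K}$ a singular fiber of a conic fibration on $X$ is a union $\ell+\ell'$ of two distinct $(-1)$-curves — lines on the geometrically rational surface $X_{\overline{K}}$ — meeting transversally at a single point, which is the singular point of the fiber. Suppose, for contradiction, that a singular fiber $F_1=\ell_1+\ell_1'$ of $\pi_1$ and a singular fiber $F_2=\ell_2+\ell_2'$ of $\pi_2$ have the same singular point $P\in X(\overline{K})$. Then each of the four lines $\ell_1,\ell_1',\ell_2,\ell_2'$ passes through $P$, and they are pairwise distinct: if two of them coincided — necessarily one from $F_1$ and one from $F_2$, since $F_i$ has two distinct components — then that common curve $\ell$ would be a component of a fiber of $\pi_1$ and also of a fiber of $\pi_2$, forcing $\ell\cdot f_1=\ell\cdot f_2=0$ and hence $\ell\cdot(f_1+f_2)=-c\,(K_X\cdot\ell)=0$, which is impossible since $-K_X\cdot\ell=1$ for a $(-1)$-curve. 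Hence $P$ would be a point of $X(\overline{K})$ lying on at least four distinct lines of $X$, i.e.\ a $4$-Eckardt point, contrary to hypothesis; so condition~(b) holds.

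Having verified (a) and (b), we are in the situation of Theorem~\ref{mainthm}: $X$ is a smooth projective surface over the number field $K$ with $X(K)\neq\emptyset$, and $\pi_1,\pi_2$ are two distinct conic fibrations satisfying the required conditions, so the theorem yields weak weak approximation for $X$. I expect the one genuinely delicate step to be the pairwise distinctness of the four lines through $P$ in the verification of~(b): this is exactly where the hypothesis on $4$-Eckardt points enters, and it rests on $f_1+f_2$ being a positive multiple of $-K_X$ (i.e.\ on the second fibration arising from the Geiser/Bertini involution). Everything else is bookkeeping in $\Pic(X)$ together with the standard structure of singular fibers of conic fibrations on del Pezzo surfaces.
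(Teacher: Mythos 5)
Your proposal is correct and follows essentially the route the paper intends: use Lemma \ref{isklem} to produce the dual fibration with $f_1\cdot f_2=8/d>0$ (condition (a)), and observe via adjunction that a shared singular point of singular fibers would lie on four exceptional curves, i.e.\ be a $4$-Eckardt point (condition (b)), so Theorem \ref{mainthm} applies. Your explicit check that the four components through such a point are pairwise distinct (via $f_1+f_2=-\frac{4}{d}K_X$ and $-K_X\cdot\ell=1$) is a detail the paper leaves implicit, and it is a worthwhile addition.
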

For the definition of $4$-Eckardt points, see Definition \ref{gepdef}.

By judiciously blowing up, our method gives a new proof of the following result.
\begin{corollary} \label{cor:dp4}
Let $X$ be a del Pezzo surface of degree $4$ containing a rational point. Then $X$ satisfies weak weak approximation.
\end{corollary}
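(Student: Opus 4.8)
The plan is to deduce this from Theorem \ref{mainthm} by exhibiting, after a suitable blow-up, a surface with two conic fibrations satisfying conditions (a) and (b). A degree-$4$ del Pezzo surface $X$ already carries conic fibrations: it is the base locus intersection of a pencil of quadrics in $\mathbb{P}^4$, and projection from a conic (or, more concretely, the fibrations coming from the two rulings after suitable constructions) furnishes fibrations $\pi_i : X \to \mathbb{P}^1$. In fact one expects $X$ to have several conic fibrations geometrically, and one wants two of them that are \emph{defined over $K$} and \emph{truly distinct} in the sense of condition (a). The first step is therefore to pin down two such conic fibrations on $X$ over $K$; since $X(K) \neq \emptyset$, one can arrange this by a standard argument (e.g.\ the pencil of quadrics defining $X$ contains singular members whose rulings give conic fibrations, and with a rational point available one can pass to a model where at least one, and after mild field-extension-free manipulation two, are rational). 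If $X$ itself already has two conic fibrations over $K$ satisfying (a) and (b), we are done immediately by Theorem \ref{mainthm}.

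The remaining issue is condition (b): two singular fibers from the distinct fibrations might share a singular point. This is exactly what the phrase ``by judiciously blowing up'' addresses. The key observation is that weak (weak) approximation is a birational invariant among smooth projective varieties, so we are free to replace $X$ by the blow-up $\widetilde{X} \to X$ at a rational point $P$ lying on such a shared singular point (or, more robustly, to blow up at finitely many well-chosen rational points, using $X(K) \neq \emptyset$ and the fact that del Pezzo surfaces of degree $4$ with a rational point have Zariski-dense rational points to find points in general position). On $\widetilde{X}$ the strict transforms of the two fibrations $\pi_i$ still define conic fibrations (blowing up a point on a conic fibration and then contracting an appropriate component preserves the fibration structure, or one simply composes with the blow-down in the other direction), but the offending intersection at $P$ is resolved: after the blow-up the two singular fibers meet the exceptional divisor at distinct points, so they no longer share a singular point. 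One checks condition (a) is preserved because the intersection product of general fibers is computed away from the exceptional locus and is unchanged (alternatively, fibers of distinct fibrations through a general point still meet). Thus $\widetilde{X}$ satisfies the hypotheses of Theorem \ref{mainthm}, hence satisfies weak weak approximation, hence so does $X$.

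The main obstacle I anticipate is the bookkeeping in the blow-up step: one must verify that after blowing up (and possibly contracting) the two resulting fibrations are genuinely distinct (condition (a)) \emph{and} that no \emph{new} shared singular points are created among the finitely many singular fibers, while simultaneously ensuring the construction can be carried out over $K$ rather than only over $\overline{K}$. This is where the choice of which rational points to blow up must be made carefully — ``judiciously'' — possibly iterating the procedure finitely many times until conditions (a) and (b) both hold. Everything else (birational invariance of weak weak approximation, the existence of the two conic fibrations on a degree-$4$ del Pezzo surface with a rational point, preservation of the conic fibration structure under blow-up at a point of a fiber) is standard and should be dispatched by citing Lemma \ref{isklem}, Iskovskih's classification, and elementary surface geometry.
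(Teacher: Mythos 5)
Your overall template (blow up, apply Theorem \ref{mainthm}, finish by birational invariance of weak weak approximation) matches the paper, but two of your key intermediate claims have genuine gaps. First, you assume that $X$ itself carries a conic fibration over $K$ (and then two) ``by a standard argument''. This is false in general: a \emph{minimal} del Pezzo surface of degree $4$ with a rational point has $\Pic(X)$ generated by $K_X$, and a conic class $C$ (with $C^2=0$, $-K_X\cdot C=2$) is independent of $K_X$, so no conic fibration over $K$ exists; the rulings of the singular quadrics in the defining pencil are in general interchanged by Galois, and a rational point does not split them. The paper never looks for fibrations on $X$: it uses unirationality (Koll\'ar--Mella) to get Zariski density of $X(K)$, blows up two suitably chosen rational points $P_1,P_2$ to obtain a degree-$2$ del Pezzo surface $Z$, and the two $K$-rational conic fibrations are produced by the classes $-K_Z-L_1+L_2$ and $-K_Z+L_1-L_2$ coming from the two intermediate cubic surfaces; condition (a) then holds automatically.

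Second, and more seriously, your mechanism for repairing condition (b) --- blow up a shared singular point $P$ and claim the strict transforms ``still define conic fibrations'' with the offending intersection resolved --- does not work. The shared singular point is the node of a singular fiber made of two exceptional curves, so after blowing up $P$ the fiber of $\pi_i\circ\nu$ over $\pi_i(P)$ has three components (the two strict transforms plus the exceptional divisor): this is no longer a conic fibration in the sense required by Theorem \ref{mainthm} (all fibers isomorphic to plane conics), and the blown-up surface is not del Pezzo since $P$ lies on $(-1)$-curves. Recovering a conic fibration by contracting ``an appropriate component'' needs a $K$-rational exceptional curve in that fiber, which need not exist: Remark \ref{GBsp} gives a degree-$2$ example where none of the four exceptional curves through the bad point is rational, which is exactly why such configurations are excluded by hypothesis rather than resolved. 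The paper instead \emph{avoids} bad points from the start: it shows that a shared singular point of the two fibrations on $Z$ would force four conics on $X$ (dual pairs through $P_1$ and through $P_2$, among the ten conic classes, with $D_{i,1}+D_{i,2}=-K_X$) to pass through a common point, and then chooses $P_1$ and afterwards $P_2$ outside an explicit finite set (at most twenty candidate points $R_{i,j}$, the exceptional curves, and the ten conics through $P_1$), using density of $X(K)$. Your proposal gestures at ``iterating the procedure finitely many times'' but supplies no such avoidance or termination argument, so the proof does not close as written.
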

The proof of Theorem \ref{mainthm} builds upon that of \cite[Thm.~1.2]{STR}, which uses the two conic fibrations $\pi_1$ and $\pi_2$ to generate rational points. Starting with a point $P \in X(K)$ on a smooth fiber of $\pi_1$ (the existence of which follows, possibly after relabelling, from condition (b)), the curve $\pi_1^{-1}(\pi_1(P))$ is a smooth conic with a rational point, hence it is isomorphic to $\P^1_K$ and therefore has many rational points. By repeating this process with the smooth fibers of $\pi_2$ through the rational points of this curve, we produce yet more rational points. The second author proves in \cite{STR} that the $K$-points generated through this method are enough to prove the \emph{Hilbert property} on $X$, i.e.\ that $X\left(K\right)$ is not thin. In this paper we prove (under the hypotheses of Theorem \ref{mainthm}) that by taking this iterative process to its fifth step, we may verify weak weak approximation on $X$, which is stronger than the Hilbert property \cite[Thm.~3.5.7]{SER}.

The proof goes by constructing, for each $n \geq 1$, auxiliary \emph{rational} varieties $C'_n$ over $K$ endowed with morphisms $f'_n:C'_n \to X$, and such that the image of the $K$-points of $C'_n$ contain the $K$-points obtained through the iterative process described above. Applying a result of Denef \cite{DEN}, we show that $f'_n$ is \emph{arithmetically surjective} for all $n \geq 5$, meaning that for sufficiently large places $v$ of $K$, $f'_n(C'_n(K_v))=X(K_v)$. Using the fact that $C'_n$ is a smooth proper \emph{rational} variety, and so satisfies weak approximation, we conclude that weak weak approximation holds on $X$.

To apply the aforementioned result of Denef, the main difficulty lies in proving that every birational modification $\widetilde{C'_5} \to \widetilde{X}$ of $C'_5 \to X$ has split fibers over the codimension-one points of $\widetilde{X}$, and the establishment of this fact (Proposition \ref{MainProp}) is the technical heart of our proof.
\subsection{Conventions}
All fields appearing in the paper are assumed to have characteristic $0$.
A \emph{variety} over a field $k$ is a geometrically integral separated scheme of finite type over $k$. A curve (resp.\ surface) is a variety of dimension $1$ (resp.\ $2$).
\begin{mydef}
Let $X$ be a smooth projective surface over a field $k$. A \emph{conic fibration} of $X$ is a morphism $\pi: X \rightarrow \P^1$ such that all fibers of $\pi$ are isomorphic to plane conics.
\end{mydef}

Note that all fibers of a conic fibration are automatically reduced: indeed, the unique irreducible component of a non-reduced fiber would have, by the adjunction formula, non-integral genus $\frac{1}{2}$. 

\begin{mydef}
Let $X$ be a smooth projective surface over a field $k$, and let $C \subset X_{\overline{k}}$ be a geometrically rational curve. We say that $C$ is an \emph{exceptional curve} if $C^2 = -1$.
\end{mydef}
Note that, by the adjunction formula, a geometrically rational curve on a del Pezzo surface is an exceptional curve if and only if it has self intersection $-1$.
\subsection{Notation}
Given a number field $K$, we denote by $\Val\left(K\right)$ the set of places of $K$. Given a morphism of schemes $f:X \rightarrow Y$ and a morphism $Z \rightarrow Y$, we denote by $f_Z:X_Z \rightarrow Z$ the base change $X \times_Y Z \rightarrow Z$.
\section{Background}
\subsection{Weak approximation}
\begin{mydef} \label{def:WA}
Let $X$ be a variety over a number field $K$. We say that $X$ satisfies \emph{weak weak approximation} if there exists a finite subset of places $S \subset \Val\left(K\right)$ such that $X\left(K\right)$ is dense in $\prod_{v \not\in S}X\left(K_v\right)$. We say that $X$ satisfies \emph{weak approximation} if we may take $S = \emptyset$.
\end{mydef}
Weak approximation can be thought of as both an indicator that rational points are well-distributed (among the local points) and that they are numerous (there are enough of them so as to be dense in the local points). Weak approximation is satisfied by $\mathbb{A}^n_K$ and is a birational invariant of smooth varieties, hence all smooth rational varieties satisfy weak approximation. Further, it is connected to the notion of \emph{thin sets}.
\begin{mydef}
Let $X$ be a variety over a field $k$ and let $A \subset X\left(k\right)$. We say that $A$ is \emph{thin} if there exists a finite collection of dominant finite morphisms $f_i: Y_i \rightarrow X$ of degree $>1$, $i=1,\dots,r$ from geometrically irreducible normal varieties $Y_i$ such that $A \setminus \cup_{i=1}^r f_i\left(Y_i\left(k\right)\right)$ is not Zariski-dense in $X$.
\end{mydef}
Work of Colliot-Th\'el\`ene and Ekedahl \cite[Thm.~3.5.7]{SER} shows that weak weak approximation implies that the variety in question possesses the \emph{Hilbert property}, meaning that the set $X\left(K\right)$ itself is not thin. The main result of \cite{STR} is that the varieties in Theorem \ref{mainthm} satisfy the Hilbert property not only over number fields, but over any Hilbertian field (that is to say, any field over which there exists a variety with the Hilbert property). It is not known whether every variety with the Hilbert property satisfies weak weak approximation, although it is suggested by Corvaja and Zannier \cite[\S1.5]{CZ} that this may not be the case.
\subsection{Geometrically rational surfaces}
In \cite{ISK}, Iskovskih showed that, over an arbitrary ground field $k$, any smooth geometrically rational surface is $k$-birational to either a conic bundle (a surface with a conic fibration) or a del Pezzo surface. Thus, when exploring properties such as weak weak approximation which are invariant under birational transformations of smooth varieties, these surfaces are of particular significance.

In this paper, we focus on surfaces lying within the intersection of these two families, i.e.\ del Pezzo surfaces with a conic fibration. It follows from work of Koll\'ar and Mella \cite[Cor.~8]{KM} that these surfaces are unirational as soon as they possess a rational point. For our methods, we will require two conic bundle structures, but the following result of Iskovskih shows that this is not too much to ask.

\begin{lemma}{\cite[Proof~of~Thm.~5]{ISK}} \label{isklem}
Let $X$ be a del Pezzo surface over a field $k$ of degree $d \in \{1,2,4\}$, and let $\pi: X \rightarrow \mathbb{P}^1$ be a conic fibration. Let $C \in \Pic\left(X\right)$ be the linear equivalence class of the fibers of $\pi$. Then the class $C' = -\frac{4}{d}K_X - C$ gives rise to another conic fibration $\pi': X \rightarrow \mathbb{P}^1$, and $C \cdot C' = \frac{8}{d}$.
\end{lemma}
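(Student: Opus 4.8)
The plan is to verify directly, using the adjunction formula and intersection theory on a del Pezzo surface, that the class $C' = -\tfrac{4}{d}K_X - C$ has the numerical profile of a conic fibration class, and then to upgrade this numerical fact to an actual morphism to $\mathbb{P}^1$. First I would record the numerical data of the given fibration class $C$: since the fibers of $\pi$ are conics, a general fiber $F$ satisfies $F^2 = 0$ and, by adjunction, $F \cdot (F + K_X) = 2p_a(F) - 2 = -2$, so $K_X \cdot F = -2$; passing to the linear-equivalence class gives $C^2 = 0$ and $K_X \cdot C = -2$. Now I would compute the corresponding invariants for $C'$. Using $K_X^2 = d$, bilinearity, and the two relations just obtained:
\begin{align}
K_X \cdot C' &= -\tfrac{4}{d}K_X^2 - K_X\cdot C = -4 + 2 = -2,\\
(C')^2 &= \tfrac{16}{d^2}K_X^2 + \tfrac{8}{d}K_X\cdot C + C^2 = \tfrac{16}{d} - \tfrac{16}{d} + 0 = 0,\\
C \cdot C' &= -\tfrac{4}{d}K_X\cdot C - C^2 = \tfrac{8}{d} - 0 = \tfrac{8}{d},
\end{align}
which already gives the last assertion $C\cdot C' = 8/d$ (note $d \in \{1,2,4\}$ makes this an integer, as it must be).

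Next I would show $C'$ actually defines a conic fibration. The Riemann--Roch computation on the surface gives $\chi(C') = \chi(\mathcal O_X) + \tfrac12\big((C')^2 - K_X\cdot C'\big) = 1 + \tfrac12(0+2) = 2$, and since $-K_X$ is ample while $(-K_X)\cdot(K_X - C') = -d + 2 < 0$ for $d\in\{1,2,4\}$ wait—one checks instead $h^2(C') = h^0(K_X - C') = 0$ because $(K_X - C')\cdot(-K_X) = -K_X^2 + K_X\cdot C' = -d - 2 < 0$, so an effective representative would have negative degree against the ample class $-K_X$, a contradiction. Hence $h^0(C') \geq 2$, so $|C'|$ is a nonempty linear system of dimension $\geq 1$ with $(C')^2 = 0$; I would argue it is base-point free (any base locus would have to be a fixed curve, but $(C')^2 = 0$ and effectivity of the moving part force the system to be composed with a pencil of curves of self-intersection $0$) and that its members are connected (since $C'\cdot(-K_X) = 2$, a disconnection would produce a curve of $-K_X$-degree $\le 1$, i.e.\ an exceptional curve or a line, meeting generic fibers, which one excludes by the intersection bookkeeping). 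The induced morphism $\pi'\colon X \to B$ to a smooth curve $B$ then has $B \cong \mathbb P^1$ because $X$ is geometrically rational (so $B$ has genus $0$) and $B$ has a $k$-point coming from a $k$-point of $X$, or more cleanly because the pencil $|C'|$ is a $g^1_?$ exhibiting $B = \mathbb P^1$ directly. Finally, each fiber of $\pi'$ is a curve $D$ with $D^2 = 0$, $K_X\cdot D = -2$, so $p_a(D) = 0$ and $-K_X\cdot D = 2$, meaning $D$ embeds via $-K_X$ (or a suitable multiple) as a degree-$2$ curve in projective space, i.e.\ a plane conic; thus $\pi'$ is a conic fibration.

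The main obstacle I anticipate is not the numerology — that is forced — but the passage from ``$C'$ has the right intersection numbers'' to ``$|C'|$ is base-point free and defines a genuine fibration with conic fibers.'' One must rule out that $|C'|$ has a fixed component or that the generic member is reducible/non-reduced; the cleanest route is to invoke the classification of linear systems with self-intersection zero on a rational surface together with the bound $-K_X\cdot C' = 2$ (which leaves essentially no room for pathology), exactly as in Iskovskih's original argument. Since the lemma is attributed to \cite[Proof of Thm.~5]{ISK}, I would in practice cite that proof for this step rather than reproduce it in full.
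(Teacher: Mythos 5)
Your proposal is correct and follows essentially the same route as the paper, which states this lemma with no proof of its own beyond the citation to Iskovskih's proof of Theorem 5: your intersection-theoretic computations ($K_X\cdot C'=-2$, $(C')^2=0$, $C\cdot C'=8/d$, and $h^0(C')\geq 2$ via Riemann--Roch and vanishing of $h^0(K_X-C')$) are all accurate, and for the genuinely nontrivial step (base-point freeness and the conic structure of the members of $|C'|$) you defer to the same source the paper cites. Just tidy the sketch before use: the stray ``wait'' self-correction should be removed, and the connectedness of fibers is cleanest via adjunction parity (a disjoint component $D_i$ would have $D_i^2=0$ and $-K_X\cdot D_i=1$, giving non-integral arithmetic genus) rather than the exceptional-curve bookkeeping you allude to.
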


\begin{remark}
From the above, we see that any del Pezzo surface of degree $d \in \{1,2,4\}$ containing a (possibly singular) curve $C$ with $C^2 = 0$ and $-K_X \cdot C = 2$ is endowed with two conic fibrations, and that fibers from the two fibrations have non-zero intersection product (so condition (a) of Theorem \ref{mainthm} is satisfied). Further, on \emph{minimal} del Pezzo surfaces, i.e.\ those containing no exceptional curves over the ground field, it follows from \cite[Thm.~1]{ISK} that there exist at most two conic fibrations. As such, while the hypotheses of Theorem \ref{mainthm} ostensibly offer greater flexibility in the choice of the two conic bundles, the scenario in which the two conic bundles are ``dual'' as in Lemma \ref{isklem} should be thought of as the main case.
\end{remark}

It follows from the adjunction formula that the singular fibers of a conic fibration of a del Pezzo surface consist of two exceptional curves meeting in a point, and so condition (b) of Theorem \ref{mainthm} concerns points which lie on four exceptional curves. Such points play an important role in the arithmetic of del Pezzo surfaces of low degree, as seen in \cite{STVA}, hence we make the following definition.

\begin{mydef} \label{gepdef}
Let $X$ be a del Pezzo surface over $K$ and let $n$ be a positive integer. We say that $P \in X(\overline{K})$ is an \emph{$n$-Eckardt point} if it lies on at least $n$ exceptional curves of $X_{\overline{K}}$.
\end{mydef}

The above terminology arises from the study of cubic surfaces (del Pezzo surfaces of degree $3$), where points on three exceptional curves are known as Eckardt points. In the degree-$2$ case, points on four exceptional curves are known as generalized Eckardt points (see \cite[\S2.2]{STVA}). In each case, such points are distinguished by the fact that they lie on the maximum possible number of exceptional curves. However, on a del Pezzo surface of degree $1$ over a field of characteristic zero, the maximum number of concurrent exceptional curves is ten (see \cite[Thm.~1.2]{vLW}). While $n$-Eckardt points do not necessarily satisfy the ``maximality'' property of their predecessors, they at least share with them the property of lying on many exceptional curves.

\section{Auxiliary varieties}\label{Sec3}
Let $X$ be a smooth projective surface over a field $k$. Let $\pi_1,\pi_2: X \rightarrow \P^1$ be two conic fibrations such that $\pi_1^{-1}\left(P\right) \cdot \pi_2^{-1}\left(Q\right) > 0$ for all $P,Q \in \mathbb{P}^1(\overline{k})$. Suppose that there exists $P_0 \in X\left(k\right)$ such that the fiber $\pi_1^{-1}\left(\pi_1\left(P_0\right)\right)$ is smooth (as noted earlier, this holds, possibly after relabelling, as soon as $X\left(k\right) \neq \emptyset$ for $X$ satisfying the hypotheses of Theorem \ref{mainthm}).
 
We begin by introducing fiber products used in \cite{STR} to propagate rational points on $X$, which will be central to our proof.
\begin{mydef} \label{DefCn}
Set $C_0 = \{P_0\}$, and denote by $f_0: C_0 \rightarrow X$ the inclusion of $C_0$ in $X$. For $n \geq 1$, define $C_n$ to be the fiber product
\begin{center}
\begin{equation}\label{DiagramCn}
\begin{tikzcd}
C_n \arrow["a_n",d] \arrow["f_n",r] & X \arrow["\pi_i",d] \\
C_{n-1} \arrow{r}[swap]{\pi_i \circ f_{n-1}} & \mathbb{P}^1 ,
\end{tikzcd}
\end{equation}
\end{center}
where $i = 1$ if $n$ is odd and $i=2$ if $n$ is even.
\end{mydef}
From the rational point $C_0$, we produce the fiber $C_1 = \pi_1^{-1}\left(\pi_1\left(P_0\right)\right)$ with infinitely many rational points (since it is a conic with a smooth rational point), and in the next iteration, we produce $C_2$, whose rational points correspond to pairs $\left(P_1,P_2\right)$ with $P_1 \in C_1\left(k\right)$, $P_2 \in \pi_2^{-1}\left(\pi_2\left(P_1\right)\right)\left(k\right)$. In particular, for each of the infinitely rational points $P \in C_1\left(k\right)$ such that $\pi_2^{-1}\left(\pi_2\left(P_1\right)\right)$ is smooth, we have infinitely many rational points on $X$ lying on this fiber (there are infinitely many such fibers, since $\pi_2|_{C_1}:C_1 \to \P^1$ is dominant). We may think of these fiber products as giving rise to many rational points on $X$. Indeed, the rational points coming from the second iteration are Zariski-dense in $X$. We will show that, upon further iterating this process, the task of proving weak weak approximation on $X$ can be translated to $C_n$.

\begin{remark}\label{RmkExplicitCn}
The following explicit description of $C_n, n \geq 1$ will be useful in Section \ref{Sec:GBsp}, in which we show that condition (b) of Theorem \ref{mainthm} is necessary for our method:
\begin{equation}\label{Eq:notationforCn}
    C_n = 
\begin{cases}
\overbrace{X \times_{\pi_1,\pi_1} X \times_{\pi_2,\pi_2} \cdots \times_{\pi_2,\pi_2}X }^{n} \times_{\pi_1,f_0\circ \pi_1} C_0 \ & \text{ if } n \text{ is odd }, \\
\overbrace{X \times_{\pi_2,\pi_2} X \times_{\pi_1,\pi_1} \cdots \times_{\pi_2,\pi_2} X}^n \times_{\pi_1,f_0\circ \pi_1} C_0 \ & \text{ if } n \text{ is even }.
\end{cases}
\end{equation}

Here, the two morphisms under each sign ``$\times$'' indicate the two morphisms with respect to which we take the fiber product over $\mathbb{P}^1$. Under the above identification, the morphism $f_n$ corresponds to the projection on the first factor.
\end{remark}

\begin{proposition} \label{PropGeomint}
Let $C_n$ and $X$ be defined as above.
\begin{enumerate}[label=(\roman*)]
\item For $n \geq 2$, the morphism $f_n: C_n \rightarrow X$ is flat, projective and surjective.
\item For $n \geq 0$, the scheme $C_n$ is geometrically integral of dimension $n$.
\item For $n \geq 0$, $C_n$ is a rational projective variety.
\end{enumerate}
\end{proposition}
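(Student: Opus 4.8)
The plan is to prove all three statements by induction on $n$, since the $C_n$ are defined recursively as fiber products, and the base cases ($n=0,1$) are immediate: $C_0 = \{P_0\} \cong \Spec k$ is a rational point, and $C_1 = \pi_1^{-1}(\pi_1(P_0))$ is a smooth plane conic with a rational point, hence isomorphic to $\P^1_k$, which is a rational projective variety of dimension $1$, and the structure map $f_1$ to $X$ is a closed immersion. For the inductive step one uses Diagram \eqref{DiagramCn}: $C_n = C_{n-1} \times_{\P^1} X$, where the map $C_{n-1} \to \P^1$ is $\pi_i \circ f_{n-1}$ and the map $X \to \P^1$ is $\pi_i$ (with $i$ depending on the parity of $n$).

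For statement (i), the key point is that a conic fibration $\pi_i : X \to \P^1$ is flat: all its fibers are plane conics, hence have constant Hilbert polynomial, and $X$, $\P^1$ are both smooth (in particular $X$ is Cohen--Macaulay and $\P^1$ is regular of dimension $1$), so flatness follows either from "miracle flatness" or from the constancy of fiber dimension. It is also projective. Flatness and projectivity are stable under base change, so $f_n : C_n = C_{n-1}\times_{\P^1} X \to C_{n-1}$ is flat and projective; composing with $a_{n-1} \circ \cdots$ is not what we want — rather, I should note that $f_n$ as drawn goes to $X$ via the top horizontal arrow, which is the base change of $\pi_i \circ f_{n-1} : C_{n-1} \to \P^1$ along $\pi_i : X \to \P^1$. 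Hence $f_n$ is flat and projective provided $\pi_i \circ f_{n-1}$ is; and for $n \geq 2$ the map $f_{n-1} : C_{n-1}\to X$ composed with $\pi_i$ is dominant (this is where the hypothesis $\pi_1^{-1}(P)\cdot\pi_2^{-1}(Q) > 0$ enters: it guarantees that the image of a fiber of $\pi_1$ under $\pi_2$ is all of $\P^1$, so the maps alternate between being dominant), hence flat onto $\P^1$ since $\P^1$ is a smooth curve and $C_{n-1}$ is integral (granting statement (ii) inductively). Surjectivity of $f_n$ onto $X$ then follows because $\pi_i \circ f_{n-1}$ is surjective onto $\P^1$ and fiber product along a surjection onto the base of $\pi_i$ yields a surjection onto $X$.

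For statement (ii), dimension is additive in this relative setting: $\dim C_n = \dim C_{n-1} + 1$ since we are taking a fiber product with the relatively $1$-dimensional $\pi_i$ over a dominant map to the curve $\P^1$; with $\dim C_0 = 0$ this gives $\dim C_n = n$. Geometric integrality is the crux and I expect it to be the main obstacle: a fiber product of integral schemes need not be integral. The strategy is to exhibit $C_n$ as generically smooth and generically irreducible. Over the generic point of $C_{n-1}$, the fiber of $a_n$ is the conic $\pi_i^{-1}$ of the image point, base-changed to the function field $k(C_{n-1})$; one argues that a "generic" fiber in this family is smooth (the conic fibration $\pi_i$ has only finitely many singular fibers, and the map $C_{n-1}\to\P^1$ is dominant, so the preimage of the singular locus is a proper closed subset) and has a rational point over some extension, hence is geometrically a smooth conic with a point, i.e.\ geometrically $\cong \P^1$, which is geometrically integral. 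One concludes that $C_n$ contains a dense open which is geometrically integral, and since $C_n\to C_{n-1}$ is flat with geometrically integral generic fiber over a geometrically integral base, $C_n$ is geometrically integral. (A clean way to package this: flatness plus geometric integrality of base and of the generic fiber, via \cite[Tag 0578]{stacks-project}-type results, or an explicit argument on the total space.)

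For statement (iii), rationality follows from the same "generic conic with a rational point" picture: $C_n$ is birational to a conic bundle over $C_{n-1}$ which acquires a section — indeed $C_n \to C_{n-1}$ has a rational section because the generic fiber is a smooth conic over $k(C_{n-1})$ that has a $k(C_{n-1})$-point. Wait: one must be careful that the generic fiber has a rational point over $k(C_{n-1})$ itself, not just geometrically. This holds by construction when we track the propagated rational points, but more robustly: the previous stage $C_{n-1}$ is rational (by induction), so $k(C_{n-1})$ is a purely transcendental extension of $k$; it suffices that the generic fiber conic, which is a conic over $\P^1$ pulled back, has a point over the function field — and actually the cleaner route is: $C_n$ is birational to $C_{n-1}\times_{\P^1} X$ and since $C_1\to\P^1$ (i.e.\ $\pi_1$ restricted, giving a rational curve mapping to $\P^1$) already produces, by iteratively intersecting with fibers through the propagated points $P_0, P_1, \ldots$, a tower each step of which is a $\P^1$-bundle birationally. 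Thus $C_n$ is rational over $k$; properness/projectivity of $C_n$ is clear since $f_n$ (equivalently $a_n$) is projective and $C_0$ is proper, so one may take the construction to land in a projective model (or simply note $C_n$ as defined is a closed subscheme of a product of projective schemes over $k$). I would present (ii) and (iii) together, as both hinge on the claim that a dense open subset of each conic-fibration step is a $\P^1$-bundle with section over the base.

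The step I expect to fight hardest with is the geometric integrality in (ii): one must rule out the fiber product degenerating (e.g.\ the generic fiber of $a_n$ being a pair of lines, or the total space being reducible/non-reduced), and this is exactly where hypothesis (a) of Theorem~\ref{mainthm} (non-zero intersection of fibers from the two fibrations, reformulated as $\pi_1^{-1}(P)\cdot\pi_2^{-1}(Q)>0$) is essential, ensuring the alternating composite maps to $\P^1$ stay dominant so that "generic fiber" arguments apply.
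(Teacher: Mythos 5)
Parts (i) and (ii) of your argument are essentially sound and run along the same lines as the paper: flatness/projectivity/surjectivity of $f_n$ by base change from $\pi_i\circ f_{n-1}$ (the paper checks flatness only for $\pi_2\circ f_1$ via miracle flatness and hypothesis (a), then propagates by base change and composition, whereas you use ``integral and dominant over a smooth curve implies flat'' at every level --- both work, yours as a joint induction with (ii)), and geometric integrality of $C_n$ from flatness of $a_n$ plus geometric integrality of its generic fiber over the geometrically integral base $C_{n-1}$, which is exactly the paper's appeal to \cite[Prop.~4.3.8]{LIU}.

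There is, however, a genuine gap in (iii). Everything hinges on the claim that the generic fiber of $a_n:C_n\to C_{n-1}$, a smooth conic over $k(C_{n-1})$, has a $k(C_{n-1})$-rational point, and none of your three attempted justifications establishes this. The ``propagated'' points $P_0,P_1,\dots$ are closed ($K$-)points of $C_n$ lying over closed points of $C_{n-1}$; they say nothing about the generic fiber. The remark that $k(C_{n-1})$ is purely transcendental over $k$ is a red herring: the conic in question is the generic fiber of $\pi_i$, a conic over $k(\P^1)$ which may have no $k(\P^1)$-point at all (that is the whole point of nontrivial conic bundles), and a conic over a field need not acquire points after a purely transcendental extension; and the ``tower of $\P^1$-bundles'' argument presupposes exactly the section whose existence is in question. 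The missing idea is the tautological section used in the paper: by the universal property of the fiber product \eqref{DiagramCn}, the pair $(\id_{C_{n-1}},f_{n-1})$ defines a morphism $\sigma_n:C_{n-1}\to C_n=C_{n-1}\times_{\P^1}X$ with $a_n\circ\sigma_n=\id$, precisely because $\pi_i\circ f_{n-1}$ is the map along which the fiber product is formed; restricting $\sigma_n$ to the generic point of $C_{n-1}$ gives the desired $k(C_{n-1})$-point of the generic fiber (equivalently, for $n-1\geq 2$, the dominant map $f_{n-1}$ embeds $k(X)$ into $k(C_{n-1})$ over $k(\P^1)$, and a conic always has a point over its own function field). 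With that section in hand, the generic fiber is a form of $\P^1$ with a rational point, hence $\P^1_{k(C_{n-1})}$, so $C_n$ is birational to $C_{n-1}\times\P^1$ and rationality follows by induction; without it, your proof of (iii) does not go through.
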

\begin{proof}
\begin{enumerate}[label=(\roman*)]
\item Since flatness, projectivity and surjectivity are preserved under base change and composition, it suffices to verify that $\pi_2 \circ f_1:C_1 \rightarrow \mathbb{P}^1$ is flat, projective and surjective. By miracle flatness \cite[Exercise~III.10.9]{HAR}, the morphism $\pi_2 \circ f_1$ is either flat or constant. In the latter case, we would have that $C_1$ is contained in a fiber of $\pi_2$, which would imply that $C_1\cdot \pi_2^{-1}(P)=0$ for every $P \in \P^1$. By hypothesis $(a)$ of Theorem \ref{mainthm}, this cannot hold, hence we deduce flatness. As the composition of projective morphisms, $\pi_2 \circ f_1$ is also projective, and as a non-constant morphism of smooth projective curves, it is surjective. 

\item Since the formation of the $C_n$ commutes with base change of the field, we may assume that $k$ is algebraically closed, hence we need only prove that $C_n$ is integral. We prove this by induction on $n \geq 1$ (the claim being trivial for $n=0$). 

$(n=1)$: Since $C_1$ is smooth and connected, it is integral. 

$(n-1\Rightarrow n, n \geq 2)$: Assume that $C_{n-1}$ is integral.
Note that each $\pi_i$ is flat with geometrically integral generic fiber. These two properties are preserved under surjective base change, so $a_n: C_n \to C_{n-1}$ satisfies them as well. Applying \cite[Prop.~4.3.8]{LIU} to $a_n$, we deduce that $C_n$ is integral.

\item Projectivity follows from projectivity of $f_n$, so it remains to prove rationality. Again, we induct on $n$. Note that the cases $n=0,1$ are trivial.

Assume that $C_{n-1}$ is rational. Note that the morphism $\pi_i \circ f_{n-1}: C_{n-1} \rightarrow \P^1$ is surjective for every $n\geq 2$, hence it sends the generic point of $C_{n-1}$ to the generic point of $\P^1$. It follows that the geometric generic fiber of $C_n \to C_{n-1}$, being a base change of the geometric generic fiber of $\pi_i:X \to \P^1$, is isomorphic to $\P^1$. Note, moreover, that the morphism $C_n \to C_{n-1}$ has a natural section induced by $\id_{C_{n-1}}$ and $f_{n-1}$. Therefore the generic fiber of $C_n \to C_{n-1}$ is a form of $\P^1$ with a rational point, hence it is isomorphic to $\P^1$. Since $C_{n-1}$ is rational and $C_n$ is integral, this implies that $C_n$ is rational as well. \qedhere
\end{enumerate}
\end{proof}

Now let $k=K$, a number field.

Let $C'_n \rightarrow C_n$ be a desingularization of $C_n$, and $f'_n$ be the composition $C'_n \rightarrow C_n \xrightarrow{f_n} X$.
Note that $C'_n$ is smooth and rational, hence it satisfies weak approximation. Therefore, in order to verify that $X$ satisfies weak weak approximation, it suffices to show that for all but finitely many places $v \in \Val\left(K\right)$, the map $f'_n:C'_n\left(K_v\right) \rightarrow X\left(K_v\right)$ is surjective.

\begin{lemma}\label{Lem:gigf}
For $n \geq 3$, the generic fiber of $f_n$ is geometrically integral, hence so is that of the composition $C'_n \to C_n \xrightarrow{f_n} X$.
\end{lemma}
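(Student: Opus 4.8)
The statement concerns the geometric generic fiber of $f_n\colon C_n\to X$ for $n\ge 3$. The natural strategy is to unwind the fiber-product description \eqref{Eq:notationforCn} and identify $f_n^{-1}(P)$, for $P$ a geometric point of $X$, with a chain of fibers of the two conic fibrations. I would first pass to $\overline{K}$ and fix a geometric point $x$ of $X$; write $P_1 = x$. The fiber $f_n^{-1}(x)$ then parametrizes tuples $(P_1,\dots,P_n,P_0)$ with $P_1$ fixed, $P_j$ and $P_{j+1}$ lying on a common fiber of $\pi_{i(j)}$ (alternating $\pi_1,\pi_2$), and $P_n$ lying on the fiber of $\pi_1$ through $P_0$. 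Equivalently, $f_n^{-1}(x)$ is the fiber over $x$ of the morphism $C_n\to X$ given by projection to the first factor, and by the fiber-product structure it can be built up inductively: $f_1^{-1}(x)$ is a single point (when $n=1$ one just records $P_1=x$ itself and the fixed $C_0$), and passing from $f_{j}^{-1}(x)$ to $f_{j+1}^{-1}(x)$ amounts to taking the fiber product with one more copy of $X$ along the relevant $\pi_i$.

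**Key steps.** I would proceed by induction, reusing exactly the mechanism of Proposition \ref{PropGeomint}(ii). The point is that the generic fiber of $f_n$ sits in a tower
\[
C_n \longrightarrow X \times_{\pi_{i_1},\pi_{i_1}} X \longrightarrow X,
\]
and, after the first two steps, each subsequent map is a base change of some $\pi_i\colon X\to\mathbb{P}^1$, which is flat with geometrically integral generic fiber (a smooth conic over a function field, or the geometric generic fiber $\cong\mathbb{P}^1$). Concretely: the generic fiber of $f_1$ and $f_2$ need not be geometrically integral (indeed $f_1$ has one-dimensional source, and $f_2^{-1}(x)$ is the fiber of $\pi_1$ through $P_0$ intersected with $\pi_2^{-1}(\pi_2(x))$, a finite scheme of length $=\pi_1^{-1}(\cdot)\cdot\pi_2^{-1}(\cdot)$ which can be disconnected), but for $n\ge 3$ the geometric generic fiber of $f_n$ fibers over the geometric generic fiber of $f_{n-1}\circ(\text{the intermediate map})$ with fibers that are base changes of $\pi_{i}$'s geometrically integral generic fiber. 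Applying \cite[Prop.~4.3.8]{LIU} as in Proposition \ref{PropGeomint}(ii), once the base of the tower is geometrically integral the total space is. So the real content is the \emph{base case} $n=3$: one must check that the geometric generic fiber of $C_3\to X$ is already geometrically integral, not merely that the further fibrations preserve integrality. Here $C_3\to X$ factors as $C_3 \to X\times_{\pi_1,\pi_1}X \to X$; the geometric generic fiber of the first arrow is $\cong\mathbb{P}^1$ (flat, geometrically integral generic fiber of $\pi_2$, with a section from $f_{n-1}$), and over the geometric generic point of $X$ the scheme $X\times_{\pi_1,\pi_1}X$ has generic fiber equal to the geometric generic fiber of $\pi_1$, namely $\cong\mathbb{P}^1$ — geometrically integral — so \cite[Prop.~4.3.8]{LIU} applies.

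**Main obstacle.** The subtle point — and the one I would be most careful about — is the bookkeeping at the base: $f_2$ genuinely fails to have geometrically integral (indeed even connected) generic fiber, so one cannot start the induction at $n=2$, and one must verify that interposing the one extra conic-fibration step (from $C_2$ to $C_3$) really does produce geometric integrality rather than just, say, connecting a multi-point fiber into something still reducible. The resolution is that the map $C_3\to X$ does \emph{not} factor through $C_2\to X$ via the projection one might naively write down; rather $C_3 \to X$ is projection onto the \emph{first} factor of \eqref{Eq:notationforCn}, and the correct intermediate variety is $X\times_{\pi_1,\pi_1}X$ (projection to the first factor), whose generic fiber over $X$ is a geometrically integral conic, after which one more base change of $\pi_2$ — again with geometrically integral generic fiber and a section — gives a geometrically integral $C_3$. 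Getting the order of the fiber products and the identification of $f_n$ with "projection to the first factor" exactly right (as recorded in Remark \ref{RmkExplicitCn}) is what makes the argument go through; once that is pinned down, the inductive step for $n\ge 4$ is a verbatim repetition of Proposition \ref{PropGeomint}(ii). Finally, the "hence" clause is immediate: a desingularization $C'_n\to C_n$ is birational, so the generic fiber of $f'_n$ differs from that of $f_n$ only over a proper closed subset, and geometric integrality of a variety is inherited by any dense open subscheme and is a property of the function field, hence is unaffected; thus the generic fiber of $f'_n$ is geometrically integral as well.
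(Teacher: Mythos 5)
Your induction step for $n \geq 4$ and your treatment of the ``hence'' clause (birational invariance under desingularization) are essentially the paper's, but your base case $n=3$ contains a genuine error. You factor $f_3$ as $C_3 \to X \times_{\pi_1,\pi_1} X \to X$ and assert that the first arrow has geometric generic fiber $\cong \P^1$, being ``one more base change of $\pi_2$ with a section''. It is not: writing a point of $C_3$ as $(P_3,P_2,P_1)$ with $\pi_1(P_3)=\pi_1(P_2)$, $\pi_2(P_2)=\pi_2(P_1)$ and $P_1 \in C_1$, the projection $C_3 \to X\times_{\pi_1,\pi_1}X$, $(P_3,P_2,P_1)\mapsto (P_3,P_2)$, is the base change (along $\pi_2\circ \mathrm{pr}_2$) of the \emph{finite} morphism $\pi_2|_{C_1}\colon C_1\to\P^1$, whose degree is $r=\pi_1^{-1}(t_1)\cdot\pi_2^{-1}(t_2)$ (equal to $4$, resp.\ $8$, for del Pezzo surfaces of degree $2$, resp.\ $1$, by Lemma \ref{isklem}). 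Its generic fiber is a finite scheme of degree $r$, not $\P^1$, and for $r\geq 2$ it is not geometrically integral (not even geometrically connected in general), so \cite[Prop.~4.3.8]{LIU} cannot be applied to your tower. Nor can any purely formal ``tower of geometrically integral fibrations'' argument settle the base case: the fiber $f_3^{-1}(P)\cong \pi_1^{-1}(\pi_1(P))\times_{\pi_2,\P^1,\pi_2} C_1$ is a fiber product of two degree-$r$ covers of $\P^1$, and such fiber products genuinely can be reducible --- e.g.\ when $\pi_1(P)=\pi_1(P_0)$ it contains the diagonal --- so irreducibility for generic $P$ is a ramification question, not a formal one.

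What the paper actually does at $n=3$: it first proves $f_n$ is flat for $n\geq 2$, invokes openness of the locus of geometrically integral fibers for flat proper morphisms \cite[Thm.~12.2.4(viii)]{EGA4}, and thereby reduces to exhibiting a \emph{single} point $P\in X$ with $f_3^{-1}(P)$ geometrically integral. That single fiber is handled by the disjoint-branch-loci criterion \cite[Lem.~2.8]{STR}: the fiber product of $\pi_1^{-1}(\pi_1(P))$ and $C_1$ over $\P^1$ is geometrically integral provided $\pi_1^{-1}(\pi_1(P))$ is geometrically irreducible and the branch loci of $\pi_2$ restricted to the two conics are disjoint; a Riemann--Hurwitz count shows each fiber of $\pi_2$ meets at most $2r-2$ fibers of $\pi_1$ non-transversally, so any $P$ outside a finite union of fibers of $\pi_1$ works. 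This branch-locus input is the real content of the lemma and is missing from your proposal; without it (or some substitute monodromy argument) the base case, and hence the whole induction, does not go through.
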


\begin{proof}
First, observe that $f_n$ is flat for all $n \geq 2$. Indeed, $\pi_2\circ f_1: C_1 \rightarrow \mathbb{P}^1$ is a finite morphism of smooth curves, hence flat by miracle flatness. Since flatness is preserved under base change, flatness of $f_2$ follows from that of $\pi_2\circ f_1$. Moreover, flatness of $f_{n+1}, n \geq 2$ follows from flatness of $f_n$ since flatness is preserved under base change and composition. For flat proper morphisms, having geometrically integral fibers is an open condition \cite[Thm.~12.2.4(viii)]{EGA4}, hence it suffices to show that, for $n \geq 3$, there exists $P \in X$ such that $f_n^{-1}\left(P\right)$ is geometrically integral.

Let us first consider the case $n=3$. Given $P \in X$, one may identify $f_3^{-1}\left(P\right)$ with the fiber of $\pi_1 \circ f_2: C_2 \rightarrow \mathbb{P}^1$ over $\pi_1\left(P\right)$, and in turn with the fiber product of $\pi_1^{-1}\left(\pi_1\left(P\right)\right)$ and $\pi_1^{-1}\left(\pi_1\left(P_0\right)\right)$ mapping to $\mathbb{P}^1$ under $\pi_2$. A sufficient condition for (geometric) integrality of this fiber product is that $\pi_1^{-1}\left(\pi_1\left(P\right)\right)$ is geometrically irreducible (which is true for $P$ not lying on any of the singular fibers of $\pi_1$) and the two morphisms to $\mathbb{P}^1$ have disjoint branch loci (see \cite[Lem.~2.8]{STR}). On the other hand, $Q \in \mathbb{P}^1$ is a common branch point if and only if $\pi_2^{-1}\left(Q\right)$ intersects both $\pi_1^{-1}\left(\pi_1\left(P\right)\right)$ and $\pi_1^{-1}\left(\pi_1\left(P_0\right)\right)$ non-transversally. Let $r$ be the intersection product of fibers from distinct fibrations. Since all fibers of the $\pi_i$ are reduced, it follows from the Riemann--Hurwitz formula \cite[Cor.~IV.2.4]{HAR} that each fiber of $\pi_2$ intersects at most $2r-2$ fibers of $\pi_1$ non-transversally. Then, for $P$ chosen outside some finite union of fibers of $\pi_1$, this fiber product is integral. We deduce the result for $n=3$.

Now we establish the induction step. Let $F_n$ be the generic fiber of $f_n$, and assume that it is geometrically integral. Let $E_n$ be the generic fiber of $\pi_i \circ f_n: C_n \rightarrow \mathbb{P}^1$. By combining Cartesian squares, we see that $F_{n+1} \cong E_n \times_{k\left(\mathbb{P}^1\right)} k\left(X\right)$, hence it suffices to verify that $E_n$ is geometrically integral. In turn, letting $D_i$ be the generic fiber of $\pi_i$, a smooth conic over $k\left(\mathbb{P}^1\right)$, we have $E_n \cong C_n \times_{X} D_i$, and the generic fiber of $E_n \rightarrow D_i$ is isomorphic to $F_n$. Since $F_n$ and $D_i$ are geometrically integral and the morphism $E_n \rightarrow D_i$ is flat as the base change of the flat morphism $f_n: C_n \rightarrow X$, it follows from \cite[Prop.~4.3.8]{LIU} that $E_n$ is geometrically integral, hence $F_{n+1}$ is also geometrically integral.
\end{proof}

\section{Splitness}

In this section we give a notion of ``split reduction'' for surjective proper morphisms between $k$-varieties $f:Y \rightarrow X$,
stemming from the following definition which first appeared in \cite[Def.~0.1]{SKO}.

\begin{definition}\label{Def:Splitvariety}
We say that a scheme $X$ of finite type over a perfect field $F$ is \emph{split} if there exists a geometrically integral open subscheme $U \subset X$.
\end{definition}
\subsection{Split schemes over DVRs}
For a field $K$ and a subfield $k \subset K$, we define the following set of discrete valuation rings (DVRs):
\[
\DVR(K,k):=\{\text{DVRs } R \mid k \subseteq R \subseteq K , \ \Frac(R)=K\}.
\]
If $K$ is a finitely generated field over $k$, we say that $R \in \DVR(K,k)$ is \emph{divisorial} if there exists a normal $k$-variety $X$ such that its fraction field $k(X)$ is isomorphic to $K$, and a codimension-$1$ point $\eta \in X$ such that $R$ is the image of the DVR $\mathcal{O}_{X,\eta} \subseteq k(X)$ under the isomorphism $k(X) \cong K$. We denote the set of divisorial DVRs $R \in \DVR\left(K,k\right)$ by 
$\DVR'(K,k)$.
Finally, for a $k$-variety $X$, we use the following notation:
\[
\DVR(X):=\left\{R \in \DVR'(k(X),k) \left|  
	\begin{gathered}
	\text{There exists a commutative diagram }  \\
	\hspace{-0.5cm}\begin{tikzcd}[column sep=tiny, row sep= tiny]
	&[-10pt] \Spec R \arrow[rd] &   \\
	\Spec k(X)\arrow[rr] \arrow[ru] &                    & X
	\end{tikzcd}  
	\end{gathered}\right.
	\right\}.
\]

Note that, when $X$ is proper, we have $\DVR\left(X\right) = \DVR'\left(k\left(X\right),k\right)$.

The following definition seems to have never appeared explicitly in the literature, although it is implicit in several works (see e.g. \cite{SKO}, \cite[\S 3]{CT08}, \cite{DEN} or \cite[Ch.\ 9]{BGbook}).

\begin{mydef} \label{def:splitred}
Let $K$ be a finitely generated field over a field $k$ of characteristic zero. Let $Y$ be a $K$-variety, and let $R \in \DVR\left(K,k\right)$. We say that $Y$ has \emph{split reduction at $R$} if for some regular integral proper $R$-scheme $\mathcal{Y}$ with generic fiber smooth and birational to $Y$, the special fiber of $\mathcal{Y}$ is split.

We say that a proper surjective morphism $f:Y \to X$ between $K$-varieties has split reduction at $R \in \DVR (k(X),k)$ if its generic fiber does.
\end{mydef}

\begin{note}
By standard desingularization results, an $R$-scheme $\mathcal{Y}$ as in Definition \ref{def:splitred} exists for $R$ divisorial. Further, for $X$ and $Y$ smooth, $f: Y \rightarrow X$ has split reduction at $R = \mathcal{O}_{X,\eta} \in \DVR(X)$ (where $\eta$ is a codimension-$1$ point of $X$) if and only if the fiber of $f$ over $\eta$ is split. By \cite[Lem.~1.1]{SKO}, the special fiber of $\mathcal{Y}$ is split if and only if there exists a residually closed local flat extension of DVRs $i: R \rightarrow R'$ of ramification index one such that the generic fiber of $\mathcal{Y}$ has a $\Frac\left(R'\right)$-point. By Nishimura's lemma \cite[Thm.~3.6.11]{POO}, we see that the above definition is independent of the choice of $R$-model $\mathcal{Y}$. 
\end{note}

\subsection{Split fibers}

\begin{proposition}\label{Prop:SplitImpliesSplit}
Let $k$ be a field with $\Char k=0$, and $f:Y \rightarrow X$ be a proper morphism of  $k$-varieties. Assume that there exists an open subscheme $U \subseteq Y$ such that $f|_U: U \to X$ is smooth and has split fibers. Then, for every $R \in \DVR(f(U))\subseteq \DVR(X)$, $f$ has split reduction at $R$.
\end{proposition}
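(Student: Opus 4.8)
The plan is to construct, by hand, a regular proper $R$-model of the generic fiber of $f$ whose special fiber manifestly contains a base change of the (split, by hypothesis) fiber of $f|_U$, and is therefore split. To set up, note first that the case $U=\emptyset$ is vacuous (then $f(U)=\emptyset$ and $\DVR(f(U))=\emptyset$), so I assume $U\neq\emptyset$. Since $f|_U$ is smooth it is open, so $f(U)$ is a nonempty open subscheme of $X$, hence a $k$-variety with $k(f(U))=k(X)$; this is what makes $\DVR(f(U))\subseteq\DVR(X)$ meaningful. Fix $R\in\DVR(f(U))$ together with a diagram $\Spec k(X)\to\Spec R\to f(U)\subseteq X$, write $x\in f(U)$ for the image of the closed point of $\Spec R$ and $\kappa$ for the residue field of $R$, so that there is an inclusion $\kappa(x)\hookrightarrow\kappa$. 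Put $\mathcal{U}:=U\times_X\Spec R$, an open subscheme of $\mathcal{Y}_1:=Y\times_X\Spec R$. Because $f|_U$ is flat, a quick ring computation (using $R\hookrightarrow k(X)$ and flatness of $\mathcal{O}_U$ over $\mathcal{O}_X$) shows that $\mathcal{U}$ is integral, flat over $\Spec R$, with generic fiber $U_{k(X)}$; in particular $U_{k(X)}$ is dense in $\mathcal{U}$.

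Next I would let $\mathcal{Y}_2\subseteq\mathcal{Y}_1$ be the scheme-theoretic closure of the generic fiber $Y_{k(X)}$ of $f$ (which is integral, being a localization of $\mathcal{O}_Y$). Then $\mathcal{Y}_2$ is integral, proper and flat over $\Spec R$, with generic fiber $Y_{k(X)}$; and since $U_{k(X)}\subseteq Y_{k(X)}\subseteq\mathcal{Y}_2$ while $U_{k(X)}$ is dense in the open subscheme $\mathcal{U}$ of $\mathcal{Y}_1$, one gets $\mathcal{U}\subseteq\mathcal{Y}_2$ as an open subscheme. Now take a resolution of singularities $\mathcal{Y}\to\mathcal{Y}_2$ — available since $R$, being divisorial, is excellent and $\Char k=0$ — chosen to be an isomorphism over the regular locus of $\mathcal{Y}_2$; as $\mathcal{U}$ is smooth over the regular ring $R$ and hence regular, the morphism restricts to an isomorphism over $\mathcal{U}$, so $\mathcal{U}\hookrightarrow\mathcal{Y}$ is an open immersion. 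The scheme $\mathcal{Y}$ is a regular integral proper $R$-scheme, and its generic fiber is regular, hence smooth over the (perfect) field $k(X)$, and birational to $Y_{k(X)}$, hence to $Y$; so $\mathcal{Y}$ is an admissible model for testing split reduction in the sense of Definition \ref{def:splitred}.

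It then remains to identify the special fiber. Since $\mathcal{U}\hookrightarrow\mathcal{Y}$ is an open immersion, $\mathcal{U}\times_R\kappa$ is open in the special fiber $\mathcal{Y}\times_R\kappa$; and $\mathcal{U}\times_R\kappa=U\times_X\Spec\kappa=(f|_U)^{-1}(x)\times_{\kappa(x)}\kappa$. By the ``split fibers'' hypothesis, $(f|_U)^{-1}(x)$ has a geometrically integral open subscheme $V$, so $V\times_{\kappa(x)}\kappa$ is a nonempty geometrically integral open subscheme of $\mathcal{U}\times_R\kappa$, hence of $\mathcal{Y}\times_R\kappa$. As $\kappa$ is perfect, this exhibits the special fiber of $\mathcal{Y}$ as split (Definition \ref{Def:Splitvariety}), and therefore $f$ has split reduction at $R$.

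I expect the main obstacle to be the bookkeeping forced by the possible non-flatness of $f$: to land in the hypotheses of Definition \ref{def:splitred} one must replace $Y\times_X\Spec R$ by the closure $\mathcal{Y}_2$ of its generic fiber in order to obtain an integral, flat model with the correct generic fiber, and then check that this replacement — and the subsequent resolution — does not destroy the open subscheme $\mathcal{U}$ coming from $U$, so that the evidently split special fiber of $\mathcal{U}\to\Spec R$ is inherited by the special fiber of $\mathcal{Y}$. The remaining ingredients (integrality of $\mathcal{U}$ via flatness of $f|_U$; existence of resolutions over the excellent base $R$; the identification $\mathcal{U}\times_R\kappa\cong(f|_U)^{-1}(x)\times_{\kappa(x)}\kappa$ together with stability of geometric integrality under base field extension) are routine.
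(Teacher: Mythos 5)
Your argument is correct and follows essentially the same route as the paper: base-change to $\Spec R$, observe that $U\times_X\Spec R$ is regular (being smooth over the regular base $R$), desingularize so as to be an isomorphism over it, and conclude that the special fiber contains the base change of the split fiber $(f|_U)^{-1}(x)$ to the residue field, hence is split. Your additional step of replacing $Y\times_X\Spec R$ by the scheme-theoretic closure of its generic fiber, to ensure an integral flat $R$-model with the correct generic fiber, is a careful refinement of a point the paper's proof leaves implicit, not a different approach.
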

\begin{proof}
Since $f|_U$ is smooth, it is flat, hence $f(U)$ is open. In particular, after restricting $X$ to $f(U)$, we may assume that $f$ is surjective.
Let $R \in \DVR(X)$ and $\xi \in \Spec R$ be the special point. We recall that $R \in \DVR(X)$ means that $R$ is a divisorial DVR containing $k$, whose fraction field is $k(X)$ and such that there exists a (necessarily unique) morphism 
$\phi:\Spec R \rightarrow X$ whose restriction to the generic point of $\Spec R$ is the natural morphism $\Spec k(X) \rightarrow X$. 

We have an open embedding $U \times_X \Spec R \subset Y\times_X \Spec R$. Note that $U \times_X \Spec R$, being smooth over the regular ring $R$, is regular, hence there exists a desingularization $Y' \rightarrow Y\times_X \Spec R$ that is an isomorphism over $U \times_X \Spec R$. In particular, the special fiber $Y'\times_R \xi$ contains the open subscheme $U' := U \times_X \xi$, which is non-empty since $\phi(\xi) \in f(U)$. Moreover, since $U'=U \times_X \xi=U_{\phi(\xi)} \times_{\phi(\xi)} \xi$, and the property of being a split $F$-scheme is invariant by extension of the base field $F$, $U'$ is a split scheme over $k(\xi)$. Therefore $Y' \times_X \xi$ is split, i.e. $f$ has split reduction at $R$.
\end{proof}
\section{Proof of main theorem}
In order to employ Denef's result on arithmetic surjectivity, it suffices to show that some $f'_n$ has geometrically integral generic fiber (Lemma \ref{Lem:gigf}) and has split reduction at every $R \in \DVR(X)$. Indeed, in the language of \cite{DEN}, the fiber of a modification of $f_n'$ over a codimension-$1$ point is split if and only if $f_n'$ has split reduction at its local ring. Establishing this split reduction will be our main focus in this section.

We denote by $\Bad(\pi_i) \subset X, i=1,2$ the set of points $x \in X$ where the morphism $\pi_i:X \rightarrow \P^1$ is not smooth. This coincides with the set of singular points on the singular fibers of $\pi_i$, a finite set of closed points. We endow $\Bad(\pi_i) \subset X$ with the natural reduced scheme structure.

\begin{lemma}\label{Lem:walkaround}
Let $U,U', W, W'$ be $k$-varieties and let
\begin{center}
\begin{equation}
\begin{tikzcd}
W \arrow[d] \arrow[r] & W' \arrow["f",d] \\
U \arrow["\phi",r]  & U' \arrow[phantom, ul, "\ulcorner" near end],
\end{tikzcd}
\end{equation}
\end{center}
be a Cartesian diagram, where $\phi:U \to U'$ is smooth. Let $S$ be a subset of $U'$ and, for each $u \in S$, $L_u$ be a finite field extension of $\kappa(u)$, such that:
\begin{enumerate}[label=(\roman*)]
    \item the fibers of $\phi$ at points $u \notin S$ are split;
    \item for each $u \in S$, the base-changed fiber $\phi^{-1}(u)_{L_u}$ is a split $L_u$-scheme;
    \item for every $w \in W'$ such that $f(w) \in S$, there exists an embedding of $\kappa(f(w))$-field extensions $L_{f(w)}\hookrightarrow \kappa(w)$. 
\end{enumerate}
Then $W \to W'$ is smooth with split fibers. 
\end{lemma}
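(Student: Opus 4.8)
The plan is to reduce everything to a pointwise statement about the fibers of $W \to W'$. Since $\phi : U \to U'$ is smooth and $W = U \times_{U'} W'$, the base-changed morphism $W \to W'$ is smooth, so it only remains to verify that its fibers are split. Fix a point $w \in W'$, let $u' = f(w) \in U'$, and observe that the fiber of $W \to W'$ over $w$ is canonically isomorphic to $\phi^{-1}(u') \times_{\kappa(u')} \kappa(w)$, i.e.\ the base change to $\kappa(w)$ of the fiber of $\phi$ over $u'$. So the task becomes: show that $\phi^{-1}(u')_{\kappa(w)}$ is a split $\kappa(w)$-scheme.

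First I would dispose of the case $u' \notin S$. By hypothesis (i), $\phi^{-1}(u')$ is a split $\kappa(u')$-scheme; since $\phi$ is smooth, $\phi^{-1}(u')$ is in particular a smooth $\kappa(u')$-scheme, hence geometrically reduced, and splitness is preserved under arbitrary field extension of the base (it is equivalent to the existence of a geometrically integral open subscheme, and a geometrically integral scheme stays geometrically integral after base field extension), so $\phi^{-1}(u')_{\kappa(w)}$ is split over $\kappa(w)$. For the case $u' \in S$, I would first use hypothesis (ii) to know that $\phi^{-1}(u')_{L_{u'}}$ is a split $L_{u'}$-scheme, then use hypothesis (iii) to obtain a field embedding $L_{u'} \hookrightarrow \kappa(w)$ of $\kappa(u')$-extensions. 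Base-changing the split $L_{u'}$-scheme $\phi^{-1}(u')_{L_{u'}}$ along $L_{u'} \hookrightarrow \kappa(w)$ and again invoking the stability of splitness under base field extension gives that $\phi^{-1}(u')_{\kappa(w)} \cong \bigl(\phi^{-1}(u')_{L_{u'}}\bigr)_{\kappa(w)}$ is split over $\kappa(w)$.

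The one technical point to pin down — and the place where I expect the only friction — is the claim that \emph{splitness of a finite-type scheme over a field is stable under extension of the base field}. For a geometrically integral scheme this is standard, and a split scheme $Z$ over $F$ contains by definition a geometrically integral open $V \subseteq Z$; after base change to $F'$, the open $V_{F'} \subseteq Z_{F'}$ is still geometrically integral (this is precisely the assertion that geometric integrality descends to, and ascends from, $\overline{F} = \overline{F'}$ in the relevant sense — here one should note $F'$ need not be perfect, but splitness as defined over a perfect field is what is needed, and in all our applications the residue fields are perfect since $\Char k = 0$, or one passes to perfect closures, which does not affect the existence of a geometrically integral open). This lemma is, in fact, exactly the content invoked in the proof of Proposition \ref{Prop:SplitImpliesSplit} ("the property of being a split $F$-scheme is invariant by extension of the base field $F$"), so I would simply cite that there and keep the present argument short: combine the two cases above to conclude that every fiber of $W \to W'$ is split, which together with smoothness of $W \to W'$ completes the proof.
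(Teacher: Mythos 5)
Your argument is correct and is exactly the verification the paper has in mind when it declares the lemma ``immediate'': smoothness passes to the base change, each fiber of $W \to W'$ over $w$ is the base change of $\phi^{-1}(f(w))$ to $\kappa(w)$ (through $L_{f(w)}$ when $f(w)\in S$), and splitness is stable under extension of the base field, which is the same fact the paper invokes in the proof of Proposition \ref{Prop:SplitImpliesSplit}. No gaps; nothing further is needed.
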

\begin{proof}
This is immediate. 
\end{proof}

\begin{lemma}\label{Lem:splitoversplit}
Let $k$ be a field, $X$ be a split finite type $k$-scheme and $f:Y \to X$ be a flat morphism with split generic fibers. Then $Y$ is a split $k$-scheme.
\end{lemma}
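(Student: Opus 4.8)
The plan is to reduce the claim to the existence of a single geometrically integral open subscheme of $Y$ witnessing splitness, constructed from the decomposition of $Y$ over the split locus of $X$. First I would fix a geometrically integral open $U \subseteq X$ (which exists since $X$ is split), and replace $X$ by $U$ and $Y$ by $Y \times_X U$; since splitness of $Y$ can be tested on any open subset and $f$ remains flat with split generic fibers after this base change, it suffices to treat the case where $X$ itself is geometrically integral. Write $\eta$ for the generic point of $X$, with residue field $k(X)$.

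Next I would use the hypothesis that the generic fiber $Y_\eta$ is split: there is a geometrically integral open $V_\eta \subseteq Y_\eta$. Because $Y_\eta$ is the fiber of a finite type morphism, $V_\eta$ spreads out: there exists a dense open $X_0 \subseteq X$ and an open subscheme $V \subseteq f^{-1}(X_0) \subseteq Y$ with $V \times_{X_0} \eta = V_\eta$, and after shrinking $X_0$ we may arrange (by the standard constructibility/openness results for "geometrically integral fibers" applied to the flat finite type morphism $V \to X_0$, e.g.\ \cite[Thm.~12.2.4]{EGA4}) that $V \to X_0$ has geometrically integral fibers and that $V$ is flat over $X_0$. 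In particular $V$ is irreducible with irreducible generic fiber over the irreducible base $X_0$, so $V$ is integral; and since $X_0$ is geometrically integral and the fibers of $V \to X_0$ are geometrically integral, $V$ itself is geometrically integral (a geometrically integral-fibered family over a geometrically integral base is geometrically integral — one checks this after base change to $\bar k$, where $X_0$ stays integral and the fibers stay integral, and applies \cite[Prop.~4.3.8]{LIU}). Then $V \subseteq Y$ is a geometrically integral open subscheme, so $Y$ is split.

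The one point that needs a little care — and which I expect to be the main (minor) obstacle — is the spreading-out step: a priori the open $V_\eta$ is only an open of the \emph{generic} fiber, and one must produce an open $V$ of $Y$ whose generic fiber it is, then shrink the base so that the good properties (flatness, geometrically integral fibers) propagate from the generic point to a neighborhood. This is exactly the content of the standard EGA limit/constructibility machinery: $V_\eta$ is the restriction to $\eta$ of some open $V \subseteq f^{-1}(X_0)$ for $X_0$ open dense (take $V$ to be the complement in $Y$ of the closure of the complement of $V_\eta$ in $Y_\eta$, intersected with a suitable $f^{-1}(X_0)$), and the locus in $X_0$ over which $V \to X_0$ is flat with geometrically integral fibers is open (using flatness of $f$ to get flatness generically, and \cite[Thm.~12.2.4(viii)]{EGA4} for the geometric integrality), hence dense since it contains $\eta$. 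Replacing $X_0$ by that locus completes the argument. Everything else is formal: integrality of a flat family over an integral base with integral generic fiber is \cite[Prop.~4.3.8]{LIU}, and the passage from "integral with geometrically integral fibers over geometrically integral base" to "geometrically integral" is the same proposition applied after base change to $\bar k$.
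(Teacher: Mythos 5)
Your proposal follows essentially the same route as the paper: spread a geometrically integral open subscheme of the generic fiber out to an open $V \subseteq Y$, and deduce geometric integrality of $V$ from flatness over a geometrically integral base by applying \cite[Prop.~4.3.8]{LIU} after base change to $\overline{k}$. The one wrinkle is your intermediate step arranging geometrically integral fibers over a dense open $X_0$ via \cite[Thm.~12.2.4(viii)]{EGA4}: that openness statement is for \emph{proper} flat morphisms, which $V \to X_0$ is not; however, the step is superfluous, since \cite[Prop.~4.3.8]{LIU} only needs the generic fiber to be integral (this is exactly how the paper argues, with no shrinking of the base needed, flatness of $V \to X$ being automatic for $V \subseteq Y$ open), and if one did want geometric integrality on a dense open set of fibers, constructibility of that locus (EGA IV, 9.7.7) together with its containing the generic point would suffice.
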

\begin{proof}
Let $U \subset X$ be a geometrically integral open subscheme, and let $\eta \in U$ be the generic point. Let $V_0$ be a geometrically integral open $\kappa(\eta)$-subscheme of $f^{-1}(\eta)$, and let $V \subset Y$ be an open subscheme such that $V \cap f^{-1}(\eta)=V_0$. Note that $f|_V:V \to U$ is a flat morphism with geometrically integral generic fiber over a geometrically integral base. Hence the base change $V \times_k \overline{k} \to U \times_k \overline{k}$ satisfies the same properties. Applying \cite[Prop.~4.3.8]{LIU} to this last morphism, we deduce that $V$ is geometrically integral. Since $V \subset Y$ is open, $Y$ is split.
\end{proof}

\begin{proposition}\label{Induction1}
Let $n \geq 4$. Assume that there exists a non-empty open subscheme $U \subseteq C_{n-1}$ such that $f_{n-1}|_{U}:U \to X$ is smooth with split fibers. Then, letting $V := f_{n-1}(U)$, there exists an open subscheme $W \subseteq C_n$ such that $f_n|_W:W \to X$ has image $\pi_i^{-1}(\pi_i(V))\setminus \Bad(\pi_i)$ (where $i=1$ if $n$ is odd and $i=2$ if $n$ is even) and is smooth with split fibers.
\end{proposition}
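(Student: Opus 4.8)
The goal is to build $W \subseteq C_n$ from the given $U \subseteq C_{n-1}$ by one more fiber-product step, and to control the image and the fibers. Recall that $C_n$ fits into the Cartesian square with $a_n : C_n \to C_{n-1}$ and $\pi_i \circ f_{n-1} : C_{n-1} \to \mathbb{P}^1$ on the bottom, and $f_n : C_n \to X$ on top. First I would restrict everything to $U$: set $C_n^U := a_n^{-1}(U) = C_n \times_{C_{n-1}} U$, which by the Cartesian property is identified with $U \times_{\mathbb{P}^1} X$ (fiber product along $\pi_i \circ f_{n-1}|_U$ and $\pi_i$), with $f_n$ restricting to the second projection. The key structural observation is that $f_n|_{C_n^U} : C_n^U \to X$ is obtained by base-changing $f_{n-1}|_U : U \to X$ along the map $X \xrightarrow{\pi_i} \mathbb{P}^1 \xleftarrow{\pi_i} X$; more precisely, there is a Cartesian square with $C_n^U \to X$ (via $f_n$) on one side, $U \to \mathbb{P}^1$ (via $\pi_i \circ f_{n-1}$) on the other, and $X \xrightarrow{\pi_i} \mathbb{P}^1$ at the bottom, i.e. $C_n^U = U \times_{\mathbb{P}^1, \pi_i} X$ where now I regard $U \to \mathbb{P}^1$ as $\pi_i \circ f_{n-1}|_U$ and project to $X$.

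\textbf{Main step: apply Lemma \ref{Lem:walkaround}.} I want to present $f_n|_W$ as a base change of a smooth-with-split-fibers morphism along a smooth map, so that Lemma \ref{Lem:walkaround} applies. The cleanest route: consider the morphism $g := f_{n-1}|_U : U \to X$, which by hypothesis is smooth with split fibers onto its image $V$. Pull back along $\pi_i : X \to \mathbb{P}^1$ composed appropriately — concretely, form the Cartesian square
\begin{equation}
\begin{tikzcd}
C_n^U \arrow[d] \arrow[r, "f_n"] & X' \arrow["g'", d] \\
U \arrow["g", r]  & X ,
\end{tikzcd}
\end{equation}
where $X' := X \times_{\pi_i, \mathbb{P}^1, \pi_i} X$ and $g'$ is one of the two projections; here $C_n^U = U \times_{X, g} X'$. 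But $g'$ itself, being a pullback of $\pi_i$ along $\pi_i$, is smooth exactly over $X \setminus \Bad(\pi_i)$ (since $\pi_i$ is smooth away from $\Bad(\pi_i)$ and smoothness is stable under base change), and its fibers over a point $x \in X \setminus \Bad(\pi_i)$ are $\pi_i^{-1}(\pi_i(x))$, a smooth conic — these are geometrically integral, hence split — while over $\Bad(\pi_i)$ the fibers are singular conics, which however are still split (a singular plane conic over a field is a union of two lines or a double line, and is split precisely when it is geometrically reducible with a rational component or geometrically integral; in fact the relevant fibers here, being reduced conics, are either geometrically integral or a union of two lines meeting at a point — the latter is split iff one of the lines is defined over the residue field). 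This is the subtle point and I return to it below. Granting it, Lemma \ref{Lem:walkaround} — applied to the square above, with $\phi = g$ smooth, $S = X \setminus V$ together with the extension data coming from split-ness of the fibers of $g$ over $V$, and condition (iii) automatic since we only need split-ness over $V = f_{n-1}(U)$ where $g$ already has split fibers — no wait: I have the roles of the two maps in Lemma \ref{Lem:walkaround} swapped. Let me instead directly invoke: the morphism $C_n^U \to X'$ is the base change of $g : U \to X$ along $g' : X' \to X$, and $g$ is smooth with split fibers, so by stability of smoothness under base change and of split-ness of fibers under field extension of the base point, $C_n^U \to X'$ is smooth with split fibers; then $f_n|_{C_n^U} : C_n^U \to X$ factors as $C_n^U \to X' \xrightarrow{g'} X$, and I separately analyze $g'$.

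\textbf{Assembling the image and conclusion.} The image of $f_n|_{C_n^U}$ is computed fiberwise: over $x \in X$, the fiber of $C_n^U \to X$ is the fiber product of $g^{-1}(\text{points of }U\text{ over }\pi_i(x))$ with $\pi_i^{-1}(\pi_i(x))$; this is non-empty and has a point over $x$ iff $x \in \pi_i^{-1}(\pi_i(V))$ and the conic $\pi_i^{-1}(\pi_i(x))$ is smooth, i.e. iff $x \in \pi_i^{-1}(\pi_i(V)) \setminus \Bad(\pi_i)$. Wait — I must be careful: the fiber of $f_n|_{C_n^U}$ over $x$ should be analyzed as follows. Using $C_n^U = U \times_{\mathbb{P}^1} X$ with projections to $U$ and to $X$: the fiber over $x \in X$ is $U \times_{\mathbb{P}^1, \pi_i(x)} \{x\} = (\pi_i \circ f_{n-1}|_U)^{-1}(\pi_i(x))$, which is non-empty iff $\pi_i(x) \in \pi_i(f_{n-1}(U)) = \pi_i(V)$, i.e. iff $x \in \pi_i^{-1}(\pi_i(V))$. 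So I should take $W := C_n^U \cap f_n^{-1}(X \setminus \Bad(\pi_i))$, an open subscheme of $C_n$; its image under $f_n$ is then $\pi_i^{-1}(\pi_i(V)) \setminus \Bad(\pi_i)$ as desired. To see $f_n|_W$ is smooth with split fibers: over $x \in X \setminus \Bad(\pi_i)$ the fiber of $f_n|_{C_n^U}$ is $(\pi_i \circ f_{n-1}|_U)^{-1}(\pi_i(x))$, which is the base change to $\kappa(x) = \kappa(\pi_i(x))$-ish — more precisely I pull back $f_{n-1}|_U : U \to X$ along $\pi_i^{-1}(\pi_i(x)) \hookrightarrow X$, a smooth conic with a $\kappa(x)$-point $x$, hence $\cong \mathbb{P}^1_{\kappa(x)}$; so this fiber is $U \times_X \mathbb{P}^1_{\kappa(x)}$, and since $f_{n-1}|_U$ is smooth with split fibers and $\mathbb{P}^1_{\kappa(x)} \to X$ hits $V$ (as $x \in \pi_i^{-1}(\pi_i(V))$ when we further restrict), Lemma \ref{Lem:splitoversplit} or Lemma \ref{Lem:walkaround} gives that this fiber is split, and smoothness is inherited by base change. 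The honest way to phrase all of this is: apply Lemma \ref{Lem:walkaround} to the Cartesian square $W \to f_n^{-1}(\text{base}) $ ... I would set it up as: $\phi$ is the smooth map $\pi_i^{-1}(\pi_i(V)) \setminus \Bad(\pi_i) \to (\text{image in }\mathbb{P}^1)$ induced by $\pi_i$ — no. The main obstacle, and where I would spend the most care, is precisely the bookkeeping of which Cartesian square to feed into Lemma \ref{Lem:walkaround}: one wants the vertical map $\phi$ in that lemma to be the smooth morphism $f_{n-1}|_U$ (or a suitable pullback of it that is still smooth with split fibers over the right set $V$), and the horizontal base-change to be along the map $X \setminus \Bad(\pi_i) \to \mathbb{P}^1$, $\pi_i$, which requires knowing $\pi_i$ restricted away from $\Bad(\pi_i)$ is smooth — true by definition of $\Bad(\pi_i)$ — so that condition (i) holds; then conditions (ii), (iii) are vacuous because the subset $S$ can be taken empty once we have removed $\Bad(\pi_i)$ and restricted the source appropriately, or they follow directly from the split-fibers hypothesis on $f_{n-1}|_U$. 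Checking that no residue-field obstruction (condition (iii)) arises — i.e. that the relevant conic fibers really do acquire rational points over the residue fields appearing — is the one genuinely non-formal verification, and it works because a smooth conic over $\kappa(x)$ with the rational point $x$ is split, so its base change to any extension is split, with no extension of $\kappa(x)$ required.
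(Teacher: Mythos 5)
There is a genuine gap, and it is at the exact point the proposition is really about. Your argument rests on the claim that for $x \in X \setminus \Bad(\pi_i)$ the fiber $\pi_i^{-1}(\pi_i(x))$ is a smooth conic, so that in Lemma \ref{Lem:walkaround} ``$S$ can be taken empty'' and ``no extension of $\kappa(x)$ is required''. That is false: $x \notin \Bad(\pi_i)$ only says that $x$ is not the \emph{singular point} of its fiber; the fiber itself may still be a singular conic, and in particular a non-split one (two conjugate lines). Removing $\Bad(\pi_i)$ from the target does not remove such points $x$. Over such an $x$, writing $t = \pi_i(x)$, the fiber of $f_n|_{C_n^U}$ is $\left(U \times_X \pi_i^{-1}(t)\right) \otimes_{\kappa(t)} \kappa(x)$, which maps (smoothly, with split fibers) onto the open subscheme $\left(V \cap \pi_i^{-1}(t)\right) \otimes_{\kappa(t)} \kappa(x)$; when $\pi_i^{-1}(t)$ is non-split this base is not split over $\kappa(t)$, so splitness of the fiber of $f_n$ over $x$ is not automatic and genuinely needs a residue-field argument. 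The paper's proof is organized around precisely this case: it takes $S \subset \P^1$ to be the locus of singular fibers, defines the quadratic extension $L_u$ via the Stein factorization of $\pi_i^{-1}(u)^{\mathrm{reg}} \to \Spec k(u)$, verifies condition (iii) of Lemma \ref{Lem:walkaround} (every point of a singular fiber away from its node has residue field containing $L_u$) and condition (ii) by applying Lemma \ref{Lem:splitoversplit} after base change to $L_u$. Your setup could be repaired along the same lines, since $x$ being a regular point of the non-split conic does force $L_t \subseteq \kappa(x)$; but as written you assert the opposite (``conditions (ii), (iii) are vacuous''), which skips the technical heart of the statement rather than proving it.

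A secondary, fixable, problem concerns smoothness. Your $W = C_n^U \cap f_n^{-1}(X \setminus \Bad(\pi_i))$ still contains points lying over $u \in U$ with $f_{n-1}(u) \in \Bad(\pi_i)$ (whenever $V$ meets $\Bad(\pi_i)$), and $f_n|_W$ is not smooth at such points even though their images avoid $\Bad(\pi_i)$: the fiber of $f_n|_{C_n^U}$ over $x$ with $\pi_i(x)=t$ is $U_t \otimes_{\kappa(t)}\kappa(x)$ with $U_t = U \times_X \pi_i^{-1}(t)$, and since $U_t \to \pi_i^{-1}(t)$ is smooth, $U_t$ is non-regular above the node of a singular fiber $\pi_i^{-1}(t)$, so this fiber of $f_n$ is not smooth there. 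This is why the paper first replaces $U$ by $U \cap f_{n-1}^{-1}\left(X \setminus \Bad(\pi_i)\right)$, checking that $\pi_i^{-1}(\pi_i(V))$ is unchanged by this shrinking; your construction needs the same preliminary step (together with that check on the image) before the smoothness claim for $f_n|_W$ is correct.
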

\begin{proof}
After restricting $V$ (resp.~$U$) to $V \setminus \Bad(\pi_i)$ (resp.~ $f_{n-1}^{-1}(V\setminus \Bad(\pi_i)) \cap U$) and noting that $\pi_i^{-1}(\pi_i(V))=\pi_i^{-1}(\pi_i(V\setminus \Bad(\pi_i)))$, we may assume that $\Bad(\pi_i) \cap V = \emptyset$. Therefore, letting $U':= \pi_i(V)\subset \P^1$ (which is open as $\pi_i$ is flat \cite[Exercise III.9.1]{HAR}), we have that $V \to U'$ is smooth.

Let $W' := \pi_i^{-1}(U') \setminus \Bad(\pi_i)$. Note that the composition $U \to V \to U'$ is smooth as it is a composition of smooth morphisms. Let $W := U \times_{U'} W'$. Since $U \rightarrow U'$ is surjective, so is $W \rightarrow W'$. Note that we have a natural open embedding $W = U \times_{U'} W' \subseteq C_{n-1}\times_{\P^1} X = C_n$. 

In order to illustrate the relationship between the various morphisms introduced thus far, we include the following diagram.
\begin{equation}\label{Diagr:PropSplitFibers}
\begin{tikzcd}
                                                                          & C_n \arrow[d, "a_n"] \arrow[rr, "f_n"] &                                                  & X \arrow[d, "\pi_i"']   &                                                      \\
                                                                          & C_{n-1} \arrow[r, "f_{n-1}"]  \arrow[ldd, "\subseteq"', near start, leftarrow]         & X \arrow[r, "\pi_i"]  \arrow[dd,  "\subseteq", near start, leftarrow]  \arrow[ul, phantom, "\ulcorner", very near end, description]                          & \mathbb{P}_K^1  \arrow[rdd, "\subseteq", leftarrow,  near start]               &                                                      \\
W \arrow[d, "a_n|_{W}"'] \arrow[ruu, "\subseteq"] \arrow[rrrr, "f_n|_{W}",near start, crossing over] &                                        &                                                  &                      & W' \arrow[luu, "\subseteq"'] \arrow[d, "\pi_i|_{W'}"] \\
U \arrow[rr, "f_{n-1}|_U"]                        &           \arrow[ul, phantom, "\ulcorner", very near end, description]                              & V  \arrow[rr, "\pi_i|_V"]    &                      & U',                       
\end{tikzcd}
\end{equation}
All of the depth-oriented morphisms are open embeddings and the front and back squares are Cartesian by definition.

We claim that $\restricts{f_n}{W}:W \rightarrow W' \subset X$ has split fibers. To show this, we use Lemma \ref{Lem:walkaround} on the front square of Diagram \eqref{Diagr:PropSplitFibers}. We must define $S$ and the extensions $L_u$, and verify that the assumptions of the lemma hold.

Let $S\subset \P^1_K$ be the (finite) subset over which $\pi_i$ has a singular fiber. For each $u \in S$, let
$\pi_i^{-1}(u)^{reg}$ be the regular locus of $\pi_i^{-1}(u)$ and $\pi_i^{-1}(u)^{reg} \to \Spec L'_u \to \Spec k(u)$ be the Stein decomposition of $\pi_i^{-1}(u)^{reg} \to \Spec k(u)$. Since all fibers of $\pi_i$ are conics, $L'_u$ is a quadratic extension of $k(u)$, possibly split. We define $L_u:= L'_u$ when $L'_u$ is a field, and $L_u:= k(u)$ when $L'_u \cong k(u)^{\oplus 2}$.
Note that all irreducible components of $\pi_i^{-1}(u)^{reg}$ (i.e.\ two affine lines if $L_u=k(u)$ and all of $\pi_i^{-1}(u)^{reg}$ if $L_u$ is a field) are geometrically integral $L_u$-schemes, and that $\pi_i^{-1}(u)$ is a geometrically integral (hence split) $k(u)$-scheme for $u \notin S$.

For each $u \in U'$, let $W'_u$ (resp.~$V_u$, $U_u$) be the fiber of $W' \to U'$ (resp.~$V\to U'$, $U \rightarrow U'$) at $u$.
Note that, for each $u \in S$, $W'_u$ and $V_u$ are open subschemes of $\pi_i^{-1}(u)^{reg}$. In particular:
\begin{enumerate}
    \item $W'_u \to \Spec k(u)$ factors as $W'_u \to \Spec L_u \to \Spec k(u)$ (hence assumption (iii) holds);
    \item $V_u \times_{k(u)} L_u$ is a split $L_u$-scheme (if $L'_u\cong k(u)^{\oplus 2}$ this is clear, otherwise note that $\pi_i^{-1}(u)^{reg}$ is irreducible, hence $V_u$ is dense in it, and $V_u \times_{k(u)} L_u$ is dense in the split $L_u$-scheme $\pi_i^{-1}(u)^{reg} \times_{k(u)} L_u$).
\end{enumerate}

Note that, for each $u \in U'$ (resp.~$u \in S$), the morphism $U_u \to V_u$ (resp.~$U_u\times_{k(u)}L_u \to V_u\times_{k(u)}L_u$) is surjective, smooth and has split fibers, as all of these properties are invariant under base change and they are satisfied by the morphism $U \to V$.

Applying, for each $u \notin S$ (resp.~$u \in S$), Lemma \ref{Lem:splitoversplit} to the morphism $U_u \to V_u$ (resp.~$U_u\times_{k(u)}L_u \to V_u\times_{k(u)}L_u$), we deduce that $U_u$ (resp.~$U_u\times_{k(u)}L_u$) is a split $k(u)$ (resp.~$L_u$)-scheme, i.e.\ assumptions (i) and (ii) hold.

We deduce from Lemma \ref{Lem:walkaround} that the morphism $W \to W'$ is smooth with split fibers, hence we have proved our claim.

Remembering that $W'=\pi_i^{-1}(\pi_i(V))\setminus \Bad(\pi_i)$ and noting that $W$ is an open subscheme of $C_n$, this proves the proposition.
\end{proof}

\begin{proposition}\label{Induction2}
Let $n \geq 4$. Let $R \in \DVR(X)$ be such that $f'_{n-1}:C'_{n-1} \rightarrow X$ has split reduction at $R$. Then $f'_n:C'_n \rightarrow X$ has split reduction at $R$. 
\end{proposition}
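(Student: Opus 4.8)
The plan is to construct a smooth proper model of the generic fibre of $f'_n$ carrying a rational point over the fraction field of a suitable DVR extension of $R$, and then to conclude via the characterisation of split reduction recalled after Definition~\ref{def:splitred}. The crucial point is that, although $C_n$ is typically singular, one need not desingularise it: since $C'_{n-1}$ is already smooth, the fibre product $\widehat{C}_n := C'_{n-1}\times_{\pi_i\circ f'_{n-1},\,\P^1,\,\pi_i} X$ (with $i$ as in Definition~\ref{DefCn} for the index $n$) has smooth generic fibre over $X$ and will play that role. I would write $\widehat f_n\colon\widehat C_n\to X$ for the projection and $\widehat\sigma\colon C'_{n-1}\to\widehat C_n$ for the section $(\id_{C'_{n-1}},f'_{n-1})$, so that $\widehat f_n\circ\widehat\sigma=f'_{n-1}$.

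First I would unwind the hypothesis. Because $f'_{n-1}$ is a dominant proper morphism with smooth source and $\Char k=0$, generic smoothness shows that the generic fibre $F'_{n-1}:=C'_{n-1}\times_X\Spec k(X)$ is a smooth proper $k(X)$-variety; it is geometrically integral by Lemma~\ref{Lem:gigf} (using $n-1\geq3$), being birational over $k(X)$ to the generic fibre of $f_{n-1}$. Since $R$ is divisorial and $f'_{n-1}$ has split reduction at $R$, the discussion after Definition~\ref{def:splitred} (model-independence together with \cite[Lem.~1.1]{SKO}) furnishes a residually closed local flat extension of DVRs $R\to R'$ of ramification index one with $F'_{n-1}(\Frac(R'))\neq\emptyset$; I would fix such an $R'$ and a morphism $g\colon\Spec\Frac(R')\to C'_{n-1}$ whose composite with $f'_{n-1}$ is the canonical morphism $\Spec\Frac(R')\to\Spec k(X)\hookrightarrow X$.

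The geometric core is the claim that $\widehat F_n:=\widehat C_n\times_X\Spec k(X)$ is a smooth proper geometrically integral $k(X)$-variety, birational over $k(X)$ to the generic fibre $F_n$ of $f_n$ and to the generic fibre $F'_n$ of $f'_n$. Indeed, since $\pi_i$ is dominant one has $\widehat F_n\cong E'_{n-1}\times_{k(\P^1)}k(X)$, where $E'_{n-1}$ is the generic fibre of the dominant morphism $\pi_i\circ f'_{n-1}\colon C'_{n-1}\to\P^1$ and $k(\P^1)\hookrightarrow k(X)$ is induced by $\pi_i$. As $C'_{n-1}$ is smooth and $E'_{n-1}$ is a fibre over the generic point of $\P^1$, generic smoothness gives that $E'_{n-1}$ is smooth over $k(\P^1)$; it is proper because $\pi_i\circ f'_{n-1}$ is, and it is geometrically integral since it is birational over $k(\P^1)$ to the generic fibre of $\pi_i\circ f_{n-1}$, whose geometric integrality is established in the proof of Lemma~\ref{Lem:gigf}. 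Base-changing to $k(X)$ yields the asserted properties of $\widehat F_n$; birationality with $F_n$ (resp.\ $F'_n$) follows from $F_n\cong E_{n-1}\times_{k(\P^1)}k(X)$ and the identifications $k(C_{n-1})=k(C'_{n-1})$, $k(C_n)=k(C'_n)$.

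Finally I would transport the point: $\widehat\sigma\circ g\colon\Spec\Frac(R')\to\widehat C_n$ has composite with $\widehat f_n$ equal to $f'_{n-1}\circ g$, hence factoring through $\Spec k(X)$, so by the universal property of the fibre product it defines an element of $\widehat F_n(\Frac(R'))$; in particular $\widehat F_n(\Frac(R'))\neq\emptyset$. Choosing a regular integral proper $R$-scheme $\mathcal Y$ with generic fibre $\widehat F_n$ (which exists since $R$ is divisorial, $\widehat F_n$ being already smooth), its generic fibre is smooth and birational to $F'_n$, and since $R\to R'$ is a residually closed local flat extension of ramification index one with a $\Frac(R')$-point on $\widehat F_n$, \cite[Lem.~1.1]{SKO} shows that the special fibre of $\mathcal Y$ is split. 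By Definition~\ref{def:splitred}, $f'_n$ has split reduction at $R$. I expect the only real obstacle to be the idea of routing the argument through $\widehat C_n$ built from the smooth variety $C'_{n-1}$, so that its generic fibre over $X$ is automatically smooth and no desingularisation of $C_n$ (whose generic fibre is only geometrically integral, not smooth) is needed; once this model is in place the transport of the point via $\widehat\sigma$ is immediate, and what remains is the bookkeeping of birational identifications required to invoke the model-independence of split reduction.
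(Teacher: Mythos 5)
Your proof is correct, and it reaches the conclusion by a route that is parallel to, but technically different from, the paper's. The paper also exploits the section $\sigma_n\colon C_{n-1}\to C_n$ coming from Diagram \eqref{DiagramCn}, but it stays on the desingularisation $C'_n$: since $a_n$ is generically smooth, $\sigma_n$ lifts to a rational map $\sigma'_n\colon C'_{n-1}\dashrightarrow C'_n$ commuting with the projections to $X$; then \cite[Lem.~1.1(a)]{SKO} produces $R\to R'$ and a $K(R')$-point of $(C'_{n-1})_{K(X)}$, Nishimura's lemma applied to the induced rational map $(C'_{n-1})_{K(X)}\dashrightarrow(C'_{n})_{K(X)}$ transfers the point, and \cite[Lem.~1.1(b)]{SKO} concludes. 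You instead bypass $C'_n$ entirely by forming $\widehat C_n=C'_{n-1}\times_{\P^1}X$, which carries an honest (regular) section and whose generic fibre over $X$ is automatically smooth by generic smoothness applied to $\pi_i\circ f'_{n-1}$; the point then transports through the section with no appeal to Nishimura at that stage, and the birational/model flexibility built into Definition~\ref{def:splitred} lets you certify split reduction of $f'_n$ on a regular proper $R$-model of $\widehat F_n$ rather than of $(C'_n)_{K(X)}$ itself. What the paper's route buys is that no new birational identifications are needed, at the cost of one more use of Nishimura; what yours buys is a genuine section and a smooth generic fibre for free, at the cost of the (easy, and parallel to the computation $F_{n}\cong E_{n-1}\times_{k(\P^1)}k(X)$ already made in Lemma~\ref{Lem:gigf}) verification that $\widehat F_n\cong E'_{n-1}\times_{k(\P^1)}k(X)$ is smooth, proper, geometrically integral and birational to the generic fibre of $f'_n$. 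Both arguments rest on the same inputs: geometric integrality from Lemma~\ref{Lem:gigf} (which is where $n\geq 4$ enters, via $n-1\geq 3$), the model-independence of Definition~\ref{def:splitred}, and \cite[Lem.~1.1]{SKO} used in both directions; so your proof is a valid, slightly reorganised alternative rather than a shortcut relying on anything the paper does not already establish.
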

\begin{proof}
Recall that Diagram \eqref{DiagramCn} implies the existence of a section $\sigma_n:C_{n-1} \rightarrow C_n$ (commuting with projection to $X$) to the morphism $a_n:C_n \rightarrow C_{n-1}$. 
Being the dominant base change of a generically smooth morphism, $a_n$ is generically smooth, i.e.\ there exists an open subscheme $U \subseteq C_{n-1}$ such that $a_n^{-1}(U)$ is a smooth open subscheme of $C_n$. 
In particular, the image $\sigma_n(\eta(C_{n-1}))$ of the generic point $\eta(C_{n-1})$ of $C_{n-1}$ is a smooth point of the $K$-variety $C_n$. Therefore, if $C'_{n-1} \rightarrow C_{n-1}$ is a desingularization of $C_{n-1}$ and $C'_n \rightarrow C_n$ is a desingularization of $C_n$, then there exists a rational map $\sigma'_n:C'_{n-1} \dashrightarrow C'_n$, commuting with projection to $X$. Denoting by $(C'_{n-1})_{K(X)}$ and $(C'_{n})_{K(X)}$ the generic fibers of $f'_{n-1}$ and $f'_n$, we have that $\sigma'_n$ induces a rational map $(C'_{n-1})_{K(X)}\dashrightarrow (C'_{n})_{K(X)}$ of $K(X)$-varieties. Since $(C'_{n-1})_{K(X)}$ has split reduction at $R$ (see Definition \ref{def:splitred}), by \cite[Lem.~1.1(a)]{SKO} there exists a residually closed local flat extension of DVRs $i:R \to R'$ such that, denoting by $K(R')$ the fraction field of $R'$, $(C'_{n-1})_{K(X)}$ has a $K(R')$-point. By Nishimura's lemma and the existence of a rational map $(C'_{n-1})_{K(X)}\dashrightarrow (C'_{n})_{K(X)}$, we deduce that $(C'_{n})_{K(X)}$ has a $K(R')$-point. This last condition implies, by \cite[Lem.~1.1(b)]{SKO}, that $(C'_{n})_{K(X)}$ has split reduction at $R$, thus concluding the proof of the proposition.
\end{proof}

\begin{corollary}\label{Cor:EverywhereSplit}
If $\Bad(\pi_1) \cap \Bad(\pi_2)=\emptyset$, then, for every $R \in \DVR(X)$, the morphism $f'_5:C'_5 \rightarrow X$ has split reduction at $R$.
\end{corollary}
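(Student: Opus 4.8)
The plan is to feed a base case at $C_3$ into Propositions~\ref{Induction1} and~\ref{Induction2}, and then to dispatch an arbitrary $R\in\DVR(X)$ by looking at where the closed point of $\Spec R$ maps in $X$: if it avoids $\Bad(\pi_1)$ we use $f_5$, and if it avoids $\Bad(\pi_2)$ we use $f_4$ (pushed forward to $f'_5$ via Proposition~\ref{Induction2}); the hypothesis $\Bad(\pi_1)\cap\Bad(\pi_2)=\emptyset$ is exactly what makes these two cases exhaust $\DVR(X)$.

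For the base case, recall from Lemma~\ref{Lem:gigf} that $f_3\colon C_3\to X$ is flat, proper and surjective with geometrically integral generic fibre, and --- this is the point I would lean on --- that its proof locates the set of $P\in X$ with $f_3^{-1}(P)$ not geometrically integral inside a \emph{finite union of fibres of $\pi_1$}. Letting $V_3$ be the (open) locus of geometrically integral fibres and $U_3\subseteq C_3$ the intersection of $f_3^{-1}(V_3)$ with the smooth locus of $f_3$, generic smoothness of geometrically integral varieties in characteristic $0$ gives $f_3(U_3)=V_3$ with $f_3|_{U_3}$ smooth with geometrically integral (hence split) fibres, and $X\setminus V_3$ contained in finitely many $\pi_1$-fibres. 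Proposition~\ref{Induction1} with $n=4$ then yields an open $W_4\subseteq C_4$ with $f_4|_{W_4}$ smooth with split fibres and image $\pi_2^{-1}(\pi_2(V_3))\setminus\Bad(\pi_2)$; since a $\pi_2$-fibre is a connected curve and cannot sit inside a finite union of pairwise disjoint $\pi_1$-fibres (any component of it lying in a single such fibre would contradict hypothesis~(a), by comparing numerical classes or self-intersections), one has $\pi_2(V_3)=\P^1$ and hence this image equals $X\setminus\Bad(\pi_2)$. A second application, $n=5$ with $U=W_4$, gives an open $W_5\subseteq C_5$ with $f_5|_{W_5}$ smooth with split fibres and image $\pi_1^{-1}\big(\pi_1(X\setminus\Bad(\pi_2))\big)\setminus\Bad(\pi_1)=X\setminus\Bad(\pi_1)$, the last equality because the finite set $\Bad(\pi_2)$ contains no $\pi_1$-fibre.

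Now take $R\in\DVR(X)$ and let $x\in X$ be the image of the closed point of $\Spec R$ under the morphism $\Spec R\to X$ witnessing $R\in\DVR(X)$. If $x\notin\Bad(\pi_1)$, this morphism factors through $X\setminus\Bad(\pi_1)=f_5(W_5)$, so $R\in\DVR(f_5(W_5))$ and Proposition~\ref{Prop:SplitImpliesSplit} applied to $f_5$ and $U=W_5$ shows $f_5$ has split reduction at $R$; as $C'_5\to C_5$ is birational and split reduction depends only on the birational class of the generic fibre (cf.\ the note after Definition~\ref{def:splitred}), $f'_5$ has split reduction at $R$. If instead $x\notin\Bad(\pi_2)$, then $R\in\DVR(X\setminus\Bad(\pi_2))=\DVR(f_4(W_4))$, so Proposition~\ref{Prop:SplitImpliesSplit} applied to $f_4$ and $U=W_4$ shows $f_4$, hence $f'_4$, has split reduction at $R$, and Proposition~\ref{Induction2} with $n=5$ carries this to $f'_5$. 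Since $\Bad(\pi_1)\cap\Bad(\pi_2)=\emptyset$, every $R\in\DVR(X)$ falls into (at least) one of the two cases, which proves the corollary.

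The part I expect to be the real obstacle is the base case: it is crucial not just that $f_3$ be smooth with split fibres over some dense open $V_3\subseteq X$, but that $X\setminus V_3$ miss $\pi_1$-fibres so thoroughly that a single application of Proposition~\ref{Induction1} produces image \emph{exactly} $X\setminus\Bad(\pi_2)$ --- otherwise $f_4|_{W_4}$ would also omit finitely many $\pi_2$-fibres, which might meet $\Bad(\pi_1)$ and thereby leave some divisorial valuations uncovered by \emph{either} of $f_4$ and $f_5$. Hence one genuinely needs the refined description of the bad locus of $f_3$ coming from the proof of Lemma~\ref{Lem:gigf}, together with hypothesis~(a) and the elementary connectedness of plane conics.
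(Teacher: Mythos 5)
Your argument is correct and follows essentially the same route as the paper: a smooth-with-split-fibres open piece at level $3$, two applications of Proposition \ref{Induction1} to reach images $X\setminus\Bad(\pi_2)$ and $X\setminus\Bad(\pi_1)$, then Proposition \ref{Prop:SplitImpliesSplit} together with Proposition \ref{Induction2} and the hypothesis $\Bad(\pi_1)\cap\Bad(\pi_2)=\emptyset$ to cover all of $\DVR(X)$. The only (harmless) variation is in the base case: the paper obtains it by base-changing a neighbourhood of the generic point of $C_2$ along $\pi_1$, whereas you extract it directly from $f_3$ via openness of geometric integrality of fibres and the finite-union-of-$\pi_1$-fibres description from the proof of Lemma \ref{Lem:gigf}, invoking hypothesis (a) exactly where the paper does to see that the relevant image of a fibre locus under $\pi_2$ is all of $\P^1$.
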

\begin{proof}
By Lemma \ref{Lem:gigf}, the generic fiber of $\pi_1\circ f_2:C_2 \to \P^1$ is geometrically integral. Hence, for a sufficiently small neighbourhood $V$ of the generic point of $C_2$, we may assume that $\pi_1\circ f_2|_V:V \to \P^1$ is smooth with geometrically integral fibers. We let $U :=\pi_1( f_2(V))$.

We define $U_n$ by $U_3 = \pi_1^{-1}(U)$ and $U_n = \pi_i^{-1}(\pi_i(U_{n-1}))\setminus \Bad(\pi_i)$ (where $i=1$ if $n$ is odd and $i=2$ if $n$ is even) for $n=4,5$.

Note that $\pi_1^{-1}(U)$ contains the generic fiber $\pi_1^{-1}(\eta\left(\P^1\right)),$ where $\eta\left(\P^1\right) \in \P^1$ denotes the generic point. In particular, $\pi_2(U_3)=\pi_2(\pi_1^{-1}(\eta\left(\P^1\right)))=\P^1$, hence $U_4= X \setminus \Bad (\pi_2)$. Analogously, $U_5= X \setminus \Bad (\pi_1)$.

We claim that, for $n=3,4,5$, there exists an open subscheme $V_n \subseteq C_n$ such that $f_n(V_n)=U_n$ and $f_n|_{V_n}:V_n \to U_n$ is smooth with split fibers. Indeed, for $n=3$, remembering that the properties of being smooth and having split fibers are invariant under base change, this holds with $V_3:=V \times_{\P^1,\pi_1} X \subseteq C_3$; while for $n=4,5$ this follows (by induction) applying Proposition \ref{Induction1} with $U=V_{n-1}$ and letting $V_n:=W$.

Let, for $i=1,2$, $X_i := X \setminus \Bad (\pi_i)$.
We deduce from Proposition \ref{Prop:SplitImpliesSplit} applied to $f=f_4$, (resp.~$f=f_5$) and $U=V_4$ (resp.~$U=V_5$), that $f'_4$ (resp.~$f'_5$) has split reduction for all $R \in \DVR(X_1)$ (resp.~for all $R \in \DVR(X_2)$). Since $f'_4$ has split reduction for all $R \in \DVR(X_1)$, it follows by Proposition \ref{Induction2} that the same holds for $f'_5$. 

The assumption that $\Bad(\pi_1) \cap \Bad(\pi_2)=\emptyset$ implies that $\DVR(X_1) \cup \DVR(X_2)=\DVR(X)$. Therefore, we deduce that $f'_5:C'_5 \rightarrow X$, which has split reduction for all $R \in \DVR '(X_1)\cup \DVR(X_2)$, has split reduction for all $R \in \DVR(X)$, as wished.
\end{proof}

We now prove the following proposition, the heart of the proof of Theorem  \ref{mainthm}.

\begin{proposition}\label{MainProp}
If $\Bad(\pi_1) \cap \Bad(\pi_2)=\emptyset$, then $f'_5:C'_5 \rightarrow X$ is arithmetically surjective. That is, $f'_5\left(C'_5\left(K_v\right)\right) = X\left(K_v\right)$ for all but finitely many places $v \in \Val\left(K\right)$.
\end{proposition}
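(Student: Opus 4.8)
The plan is to quote the result of Denef \cite{DEN} on arithmetic surjectivity directly: all the geometric work needed to feed into it has been assembled in the preceding sections. Denef's criterion applies to a dominant morphism $f \colon Y \to X$ of smooth geometrically integral varieties over a number field and asserts that $f$ is arithmetically surjective once (i) the generic fiber of $f$ is geometrically integral, and (ii) for every modification $\widetilde{Y} \to \widetilde{X}$ of $Y \to X$ and every codimension-one point $\eta$ of $\widetilde{X}$, the fiber $\widetilde{Y}_\eta$ is split. So the proof reduces to verifying (i) and (ii) for $f = f'_5 \colon C'_5 \to X$, together with the elementary bookkeeping that $f'_5$ is a morphism to which the criterion applies.

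\textbf{Checking the hypotheses.} First I would record the standing setup: $C_5$ is geometrically integral and projective (Proposition \ref{PropGeomint}), so the desingularization $C'_5 \to C_5$ may be chosen with $C'_5$ smooth and projective, and $f'_5 \colon C'_5 \to C_5 \xrightarrow{f_5} X$ is then a dominant (indeed flat projective) morphism of smooth geometrically integral $K$-varieties. Condition (i) is immediate from Lemma \ref{Lem:gigf}, since $5 \geq 3$. For condition (ii), I would invoke the dictionary recorded just before Lemma \ref{Lem:walkaround}: the fiber of a modification of $f'_5$ over a codimension-one point $\eta \in \widetilde{X}$ is split precisely when $f'_5$ has split reduction at the divisorial DVR $\mathcal{O}_{\widetilde{X},\eta}$, and since $X$ is proper one has $\DVR(X) = \DVR'(K(X),K)$, so every such $\eta$ is accounted for (using Nishimura's lemma to pass between models). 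Corollary \ref{Cor:EverywhereSplit}, whose hypothesis $\Bad(\pi_1) \cap \Bad(\pi_2) = \emptyset$ is exactly the one assumed here, shows that $f'_5$ has split reduction at every $R \in \DVR(X)$, which is condition (ii).

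\textbf{Conclusion and main obstacle.} With (i) and (ii) in place, Denef's theorem produces a finite set $S \subset \Val(K)$ with $f'_5(C'_5(K_v)) = X(K_v)$ for all $v \notin S$ (the inclusion $f'_5(C'_5(K_v)) \subseteq X(K_v)$ being automatic), which is the statement of the proposition. The substantive content does not lie in this proof but upstream, in Corollary \ref{Cor:EverywhereSplit} together with the propagation Propositions \ref{Induction1} and \ref{Induction2} and the splitness analysis of Section \ref{Sec3} onwards; accordingly the only genuine obstacle here is to ensure that the hypotheses of Denef's criterion are met \emph{verbatim} --- smoothness of source and target, geometric integrality of the generic fiber, and the ``split fibers of every modification in codimension one'' condition in the precise form of \cite{DEN} --- and that Denef's notion of a split fiber matches the notion of split reduction at a divisorial valuation used throughout Sections \ref{Sec3}--\ref{MainProp}'s vicinity. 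I expect this matching of definitions, rather than any new argument, to be the one point requiring care.
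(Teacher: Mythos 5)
Your proposal is correct and follows essentially the same route as the paper: both reduce the statement to Denef's criterion, supply geometric integrality of the generic fiber via Lemma \ref{Lem:gigf}, and translate the ``split fibers over codimension-one points of every modification'' condition into split reduction at the divisorial DVRs $\mathcal{O}_{\widetilde{X},D} \in \DVR(X)$, which is exactly what Corollary \ref{Cor:EverywhereSplit} provides. The only difference is that you spell out the bookkeeping (properness of $X$, Nishimura's lemma, smoothness of $C'_5$) slightly more explicitly than the paper does, which is harmless.
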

\begin{proof}
The proposition is an application of \cite[Thm.~1.2]{DEN} to the morphism $f'_5:C'_5 \rightarrow X$. We need only verify that $f'_5$ has geometrically integral generic fiber (Lemma \ref{Lem:gigf}) and that, using the terminology of \cite{DEN}, for every birational modification $\widetilde{f'_5}:\widetilde{C'_5} \rightarrow \widetilde{X}$ of ${f'_5}:{C'_5} \rightarrow {X}$ and every divisor $D \in \widetilde{X}^{(1)}$, the fiber $\widetilde{f'_5}^{-1}(D)$ is split. With our definitions, this last condition (that $\widetilde{f'_5}^{-1}(D)$ is split) means precisely that $f'_5:C'_5 \rightarrow X$ has split reduction at the DVR $\mathcal{O}_{\widetilde{X},D} \subset \DVR(X)$, which follows from Corollary \ref{Cor:EverywhereSplit}.
\end{proof}

\begin{proof}[Proof of Theorem \ref{mainthm}]
Let $\Val\left(K\right)$ be the set of places of $K$ and let $S \subset \Val\left(K\right)$ be a finite subset such that $f'_5(C'_5(K_v)) \to X(K_v)$ is surjective for all $v \notin S$. We will show that $X$ satisfies weak approximation off $S$. Let $(P_v)_{v \notin S} \in \prod_{v \notin S} X(K_v)$ be a collection of local points. By Proposition \ref{MainProp}, there exists $(\widetilde{P}_v)_{v \notin S} \in \prod_{v \notin S} C'_5(K_v)$ such that $f'_5(\widetilde{P}_v)=P_v$ for all $v\notin S$. 

As noted in Section \ref{Sec3}, $C'_n$ satisfies weak approximation for every $n \geq 0$. In particular, there exists $\widetilde{P} \in C'_5(K)$ that is arbitrarily close (in $\prod_{v \notin S} C'_5(K_v)$) to $(\widetilde{P}_v)_{v \notin S}$, hence $f'_5(\widetilde{P}) \in X\left(K\right)$ is arbitrarily close to $(P_v)_{v \notin S}$.
\end{proof}

\begin{remark}
A natural question arising from Proposition \ref{MainProp} is whether $5$ is the minimum value of $n$ for which $C'_n \to X$ arithmetically surjective. We shall therefore give some indication of the picture for $n \leq 4$.
\begin{enumerate}
    \item[$n\leq 1$:] The morphism $C'_n \to X$ is not surjective, so it is certainly not arithmetically surjective.
    \item[$n=2$:] The morphism $C'_n \to X$ is not arithmetically surjective except in the trivial case where the morphism $(\pi_1,\pi_2):X \to \P^1 \times \P^1$ is birational. The morphism $C'_2 \to X$ is generically finite of degree $\pi_1^{-1}(t_1)\cdot \pi_2^{-1}(t_2)$ (which is also the degree of $(\pi_1,\pi_2)$), and so cannot be arithmetically surjective when this intersection number is greater than $1$ (see \cite[Prop.~3.5.2]{SER}).
    \item[$n=3$:] The morphism $C'_n \to X$ is not arithmetically surjective in the following scenario: $\pi_1$ has a non-split singular fiber with singular point $P$, and the finite $K$-scheme $\pi_2^{-1}(\pi_2(P)) \cap C_1$ contains no points defined over the residue field $\kappa(P)$ (this condition is satisfied if the $K$-point $\pi_1(P_0)$ lies outside the thin subset $\pi_1(\pi_2^{-1}(\pi_2(P)))(\kappa(P)) \subseteq \P^1(\kappa(P))$). Indeed, let $\widetilde{X}$ be the blowup of $X$ at $P$ and let $E \subseteq \widetilde{X}$ be the exceptional divisor. Let $R \subseteq K(\widetilde{X})=K(X)$ be the DVR associated to the generic point of the irreducible divisor $E$. Then one can show that $C'_3 \to X$ {\bfseries does not} have split reduction at $R$.
    We omit the proof of this fact, which is obtained through a calculation similar to the one given in Section \ref{Sec:GBsp} of this paper (mimicking, in particular, the decomposition \ref{Decomposition}).
   
    \item[$n=4$:] The morphism $C'_n \to X$ is in some sense ``quite often'' arithmetically surjective. One can prove, using essentially the same proof as that of Corollary \ref{Cor:EverywhereSplit}, that $C'_4 \to X$ is arithmetically surjective if $P_0$ lies on a smooth fiber of $\pi_2$ and is such that, for every $P \in \Bad(\pi_2)$:
    \begin{itemize}
        \item if $\pi_1^{-1}(\pi_1(P))$ is $\kappa(P)$-split, then so is $\pi_1^{-1}(\pi_1(P)) \times_{\pi_2, \P^1,\pi_2} C_1$;
        \item if $\pi_1^{-1}(\pi_1(P))$ splits after a quadratic extension $L(P)/\kappa(P)$, then the fiber product $\pi_1^{-1}(\pi_1(P)) \times_{\pi_2, \P^1,\pi_2} C_1$ splits over $L(P)$.
    \end{itemize}
    Using the ``disjoint branch loci lemma'' \cite[Lem.~2.8]{STR} and the fact that the branch locus of the morphism $\pi_2|_{C_1}:C_1=\pi_1^{-1}(\pi_1(P_0)) \to \P^1$ contains no fixed points as $P_0$ varies (see the proof of Lemma \ref{Lem:gigf}) one may show that the last two conditions are satisfied if $P_0$ is Zariski-generic enough (same for the first, but this holds trivially).
    
    Again, we omit the proof that the conditions above are enough to deduce that $C'_4 \to X$ is arithmetically surjective.
    
The question of whether there exists an example where $C_4' \rightarrow X$ is not arithmetically surjective for any choice of $P_0$ remains open.
\end{enumerate}

\end{remark}

\begin{remark} \label{GBsp}
The assumption $\Bad(\pi_1) \cap \Bad(\pi_2)=\emptyset$ is not satisfied by all double conic bundles on del Pezzo surfaces of degrees $1$ or $2$. For example, consider the del Pezzo surface $X$ of degree $2$ given by
$$
X: w^2 = x^4 + 4x^2 y^2 + z^4 \subset \mathbb{P}\left(1,1,1,2\right).
$$
Note that, after rearranging the equation, we may factorise both sides to obtain
$$
X: g_1 g_2 = h_1 h_2,
$$
where
$$
\begin{aligned}
g_1\left(x,y,z,w\right) = w - \left(x^2 + 2y^2\right), \\
g_2\left(x,y,z,w\right) = w + \left(x^2 + 2y^2\right), \\
h_1\left(x,y,z,w\right) = z^2 - 2y^2, \\
h_2\left(x,y,z,w\right) = z^2 + 2y^2.
\end{aligned}
$$
From this factorisation we obtain the (dual) conic bundles
$$
\begin{aligned}
\pi_1: X \rightarrow \mathbb{P}^1, \quad [x:y:z:w] \mapsto [g_1:h_1] \quad \left(\textrm{or $[h_2:g_2]$}\right), \\
\pi_2: X \rightarrow \mathbb{P}^1, \quad [x:y:z:w] \mapsto [-g_2:h_1] \quad \left(\textrm{or $[-h_2:g_1]$}\right).
\end{aligned}
$$
It is easily seen that the points $[1,0,0,\pm 1]$ belong to $\Bad(\pi_1) \cap \Bad(\pi_2)$. Further, none of the exceptional meeting at these points is rational, so one cannot simply blow one of them down to obtain a cubic surface.

On the other hand, a general del Pezzo surface of degree $2$ possesses no point at which four exceptional curves meet.
Indeed, such a point exists if and only if the plane quartic over which the anticanonical model ramifies has an involution, and the generic plane quartic has trivial automorphism group. Consequently, one may think of examples with $\Bad(\pi_1) \cap \Bad(\pi_2) \neq \emptyset$ as rare.
\end{remark}

\begin{proof}[Proof of Corollary \ref{cor:dp4}]
Since $X$ contains a rational point, it is unirational, hence $X\left(K\right)$ is dense. Blowing up any two points $P_1,P_2 \in X\left(K\right)$ not on exceptional curves and not both on a curve of self-intersection zero, we obtain a del Pezzo surface $Z$ of degree $2$. Note that, blowing up first at $P_1$, we obtain a del Pezzo surface $Y_1$ of degree $3$ with a $K$-rational exceptional curve $E_1$. The class $-K_{Y_1} - E_1$ gives rise to a conic fibration on $Y_1$, and the pullback of this class gives rise to a conic fibration with class $C_1$ on $Z$. Similarly, we may first blow up at $P_2$ to produce a surface $Y_2$ with exceptional divisor $E_2$ and pull back the class $-K_{Y_2} - E_2$ to a class $C_2$ on $Z$ giving a conic fibration. It follows from \cite[Prop.~V.3.2]{HAR} that
$$
C_1 = - K_Z - L_1 + L_2, \quad C_2 = - K_Z + L_1 - L_2,
$$
where $L_1$ and $L_2$ are the pullbacks of $E_1$ and $E_2$ respectively. 

In order to apply Theorem \ref{mainthm}, it remains to show that we can choose $P_1$ and $P_2$ so that the singular fibers of these conic fibrations on $Z$ share no singular point.

Suppose that there exists a bad point $Q$ on $Z$ (i.e.\ a shared singular point of fibers from the two conic fibrations), and let $M_{i,j}$, $i,j \in \{1,2\}$ be the exceptional curves meeting at $Q$ so that $C_i = M_{i,1} + M_{i,2}$. It follows from elementary intersection multiplicity calculations and the adjunction formula that the projection of each $M_{i.j}$ onto $X$ gives a class $D_{i,j}$ whose smooth members are conics on $X$. It is well-known that there are ten one-dimensional families of conics on $X$ (defined over $\overline{K}$), and that these split into pairs whose sum (as classes) gives $-K_X$. It follows that $D_{i,1} + D_{i,2} = -K_X$. Since $M_{i,j} \cdot L_i = 2$ and $M_{i,j} \cdot L_{3-i} = 0$, it follows that $D_{i,1}$ and $D_{i,2}$ are two conics on $X$ meeting precisely at $P_i$ and $Q$. So, it suffices to show that we can choose $P_1$ and $P_2$ so that, picking any of the five dual pairs of conics through $P_1$ and any of the four remaining four dual pairs of conics through $P_2$, these four conics do not all meet in one point.

Choose $P_1 \in X\left(K\right)$ to be any rational point not on an exceptional curve of $X$. Denote by $\left(F_i,-K_X-F_i\right)$, $i=1,\dots,5$ the five pairs of dual conic classes on $X$. For each pair, consider the unique pair of representative curves through $P_1$. Note that $F_i \cdot \left(-K_X - F_i\right) = 2$, hence these representative curves intersect in at most one other point $Q_i$. Now consider the unique pair of dual conics from the four remaining pairs through each $Q_i$. Again, each such pair intersects in at most one other point $R_{i,j}$. There are at most twenty points of the form $R_{i,j}$. Choosing $P_2$ outside of this finite set and not on any exceptional curve of $X$ or the ten conics containing $P_1$, we deduce that the surface $Z_{P_1,P_2}$ obtained by blowing up $X$ at $P_1$ and $P_2$ is a del Pezzo surface of degree $2$ satisfying the hypotheses of Theorem \ref{mainthm}, hence it satisfies weak weak approximation, and therefore $X$ also satisfies weak weak approximation.
\end{proof}

\section{Intersecting bad loci}\label{Sec:GBsp}

In this section, we explain why our method fails when condition $(b)$ in Theorem \ref{mainthm} does not hold. This is encapsulated in the following result.
\begin{proposition}\label{Conditionbdoesnothold}
Let $X$ be a smooth projective surface over a number field $K$ endowed with two conic fibrations $\pi_i: X \rightarrow \mathbb{P}^1$, $i=1,2$, satisfying condition (a) of Theorem \ref{mainthm}. Suppose that there exists $P_0 \in X\left(K\right)$ lying on a smooth fiber of $\pi_1$, and for each $n \geq 0$, let $f'_n:C'_n \to X$ be the morphism defined in Section \ref{Sec3}. Assume that there exists a point $P \in X(K)$ such that:
\begin{enumerate}
\item $P$ lies in the intersection $\Bad(\pi_1) \cap \Bad(\pi_2)$, and
\item $\pi_1^{-1}(\pi_1(P))$ and $\pi_2^{-1}(\pi_2(P))$ are not split.
\end{enumerate}
Then, for every $n \geq 3$, $f'_n$ is not arithmetically surjective.
\end{proposition}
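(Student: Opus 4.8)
The strategy is to exhibit a divisorial valuation on a birational modification of $X$ at which $f'_n$ fails to have split reduction, so that Denef's criterion is not met. The natural candidate is the valuation coming from the exceptional divisor of the blow-up of $X$ at the point $P$. Concretely, let $\widetilde{X} \to X$ be the blow-up of $X$ at $P$, with exceptional divisor $E \cong \mathbb{P}^1_K$, and let $R \in \DVR(X)$ be the local ring $\mathcal{O}_{\widetilde{X},\eta(E)}$. I would show that for every $n \geq 3$, the morphism $f'_n : C'_n \to X$ does \textbf{not} have split reduction at $R$; by the equivalence recorded before Proposition~\ref{MainProp} (via \cite{DEN}) this implies that $f'_n$ is not arithmetically surjective.

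\textbf{Key steps.} First I would analyse the fiber of $C_n$ (and hence of a desingularization $C'_n$) over the generic point $\eta(E)$ of $E$; equivalently, I would base-change the whole tower of fiber products in \eqref{Eq:notationforCn} to $\Spec R$ and understand the special fiber. The crucial local input is condition (1): since $P$ is a singular point of both $\pi_1^{-1}(\pi_1(P))$ and $\pi_2^{-1}(\pi_2(P))$, and these singular fibers are (by condition~(2)) non-split conics, the two lines through $P$ in each singular fiber are conjugate over a quadratic extension of $\kappa(P) = K$. Pulling back to $E$, the point of $E$ corresponding to the tangent direction of a branch of $\pi_i^{-1}(\pi_i(P))$ at $P$ is \emph{not} rational over $\kappa(\eta(E)) = K(E)$ — it is defined over a quadratic extension. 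I would then track, through the iterated fiber product \eqref{Eq:notationforCn}, a decomposition (mirroring the ``decomposition'' referred to in the $n=3$ remark, i.e.\ what the authors label \ref{Decomposition}) of the reduction of $C_n$ at $R$ into components, and show that each geometric component is defined over a nontrivial extension of $K(E)$ — i.e.\ the special fiber has no geometrically integral open subscheme defined over $K(E)$, hence is not split. The heart of the matter is that propagating a rational point along a smooth fiber of $\pi_j$ through the non-rational points forced by the non-split singular fiber of $\pi_i$ never produces a $K(E)$-rational component, because at each stage the relevant branch point stays non-rational; condition~(2) for \emph{both} fibrations is what prevents one from ever ``escaping'' to a split component.

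\textbf{Reduction from $C'_n$ to $C_n$, and the split-reduction formalism.} To make the argument clean I would phrase non-split-reduction in terms of the existence of local points: by \cite[Lem.~1.1]{SKO}, $f'_n$ has split reduction at $R$ iff the generic fiber $(C'_n)_{K(X)}$ has a $\Frac(R')$-point for some residually closed local flat extension $R \to R'$ of ramification index one. Nishimura's lemma makes this independent of the desingularization, so it suffices to rule out such points for (a smooth model of) $C_n$. I would then show directly, using the component analysis above, that every component of the special fiber of the $R$-model of $C_n$ fails to be geometrically irreducible over the residue field of any such $R'$ (residual closedness of $R \to R'$ means the residue field of $R'$ is again $K(E)$, algebraically closed in itself, so the quadratic conjugacy obstruction persists), whence no $\Frac(R')$-point exists. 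Combining, $f'_n$ has a non-split fiber over the divisor $E$ on the modification $\widetilde{X}$, so \cite[Thm.~1.2]{DEN} shows $f'_n$ is not arithmetically surjective.

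\textbf{Main obstacle.} The technical heart — and the step I expect to be most delicate — is the explicit combinatorial-geometric bookkeeping of how the special fiber of the iterated fiber product \eqref{Eq:notationforCn} decomposes over $R$, and verifying that \emph{no} component ever becomes $K(E)$-split. One must handle the base case $n=3$ (where only one of the two fibrations contributes a non-split singular fiber, and the finite scheme $\pi_2^{-1}(\pi_2(P)) \cap C_1$ enters) and then the inductive step $n \to n+1$, checking that intersecting with a further smooth fiber of the alternating $\pi_i$ cannot split a previously non-split component — which is exactly where one uses that \emph{both} $\pi_1^{-1}(\pi_1(P))$ and $\pi_2^{-1}(\pi_2(P))$ are non-split. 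This is the mirror image of the positive argument in Section~\ref{Sec:GBsp}'s earlier analogue and of the omitted $n=3$ computation in the preceding remark, so I would carry it out by the same decomposition \ref{Decomposition} and a careful residue-field chase, the routine parts of which I would only sketch.
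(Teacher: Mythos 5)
There is a genuine logical gap at the very end of your argument, and it propagates back into what you would need to prove about the special fiber. Denef's theorem \cite[Thm.~1.2]{DEN} is only a sufficiency criterion: split fibers over all codimension-one points of all modifications \emph{imply} arithmetic surjectivity. It does not say that failure of split reduction at a single divisorial valuation (your $R=\mathcal{O}_{\widetilde{X},\eta(E)}$) implies failure of arithmetic surjectivity; and the ``equivalence recorded before Proposition \ref{MainProp}'' is merely the translation between ``split fiber of a modification over a codimension-one point'' and ``split reduction at its local ring'', not an equivalence with arithmetic surjectivity. The correct converse is the theorem of Loughran--Skorobogatov--Smeets \cite[Thm.~1.4]{LSS}: $f'_n$ fails to be arithmetically surjective if and only if some modification has a fiber over a codimension-one point that is not \emph{pseudo-split}. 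Non-split does not imply non-pseudo-split (e.g.\ a fiber whose components come in conjugate pairs split by the three quadratic subfields of a biquadratic extension is pseudo-split but not split), and pseudo-split non-split fibers do not obstruct arithmetic surjectivity. So establishing, as you propose, that the reduction at $R$ has ``no geometrically integral open subscheme defined over $K(E)$'' is strictly weaker than what is needed.

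Consequently the component bookkeeping has to be sharpened: you must exhibit a \emph{single} Galois element $\gamma\in\Gamma_{K(E)}$ fixing no multiplicity-one irreducible component of the fiber over $\eta(E)$, not just show that each component is non-rational. This is exactly how the paper proceeds (and where hypothesis (2) for \emph{both} fibrations enters): after passing to the explicit modification $\widetilde{C}_n\to\widetilde{X}$ built from $\widetilde{\pi}_i=\pi_i\circ\nu$, the reduced fiber over $E$ decomposes as in \eqref{Decomposition} into pieces each admitting a morphism to $D_1$ or $D_2$ (the strict transforms of the two non-split singular fibers), the ``all exceptional'' piece being empty because $\pi_1(P_0)\neq\pi_1(P)$; one then chooses $\gamma$ restricting nontrivially to both splitting fields $L_1$ and $L_2$, so that no component can carry a point over any field in which $\overline{K}^{\langle\gamma\rangle}$ is the algebraic closure of $K$ (the paper's Lemmas \ref{LemUinUQS} and \ref{LemWPS} make this precise and also justify passing between $C'_n$, $\widetilde{C}'_n$ and $\widetilde{C}_n$). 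A smaller inaccuracy: ``residually closed'' for $R\to R'$ in \cite[Lem.~1.1]{SKO} does not mean the residue field of $R'$ equals $K(E)$, only that $K(E)$ is algebraically closed in it; your argument should be phrased accordingly, as in Lemma \ref{LemWPS}.
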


Note that the assumptions of Proposition \ref{Conditionbdoesnothold} are satisfied by the example given in Remark \ref{GBsp}.

\begin{remark}\label{RmkPnotoverK}
The proof may be generalized to the case where $P$ is not defined over $K$, assuming that $\pi_1^{-1}(\pi_1(P))$ and $\pi_2^{-1}(\pi_2(P))$ are non-split over the field of definition of $P$. However, since our primary purpose is to show that our method fails to produce arithmetically surjective covers in certain scenarios, we assume that $P$ is $K$-rational for the sake of simplicity.
\end{remark}

\begin{remark}\label{RmkOnlyGreaterThan4IsInteresting}
Proposition \ref{Conditionbdoesnothold} holds for $n=0,1,2$ as well, but it is not interesting. Indeed, for $n=0,1$, the morphism $f'_n$ is not dominant, hence it cannot be arithmetically surjective. For $n=2$, the morphism $f'_2$ is finite of degree greater than $1$.
In fact, $f'_2$ has degree $(\pi_1^{-1}(A) . \pi_2^{-1}(B))$, which is greater than or equal to the local intersection multiplicity $(\pi_1^{-1}(\pi_1(P)) . \pi_2^{-1}(\pi_2(P)))_P$, which is at least $4$.
\end{remark}

To prove Proposition \ref{Conditionbdoesnothold} we will use \cite[Thm.~1.4]{LSS}, which provides a converse to Denef's result via the introduction of \emph{pseudo-split} schemes, the definition of which we give below.

\begin{mydef}
Let $k$ be a perfect field with algebraic closure $\overline{k}$. A $k$-scheme $X$ is \emph{pseudo-split} if for every element $\gamma$ of the absolute Galois group $\Gamma_k= \Gal\left(\overline{k}/k\right)$, there exists a multiplicity-$1$ irreducible component of $X \times_k \overline{k}$ fixed by $\gamma$.
\end{mydef}

Note that every split scheme is pseudo-split. Indeed, a scheme $X$ as in the above definition is split if and only if there exists a geometrically integral multiplicity-$1$ irreducible component, which is then necessarily fixed by every $\gamma \in \Gamma_k$. 

Denef proves in \cite{DEN} that a dominant morphism $f:Y \to X$ of smooth projective geometrically integral $K$-varieties with geometrically integral fiber is arithmetically surjective \emph{if}, for every birational modification $\widetilde{f}:\widetilde{Y}\to \widetilde{X}$ of $f$, the morphism $\widetilde{f}$ has split fibers over all codimension-$1$ points of $\widetilde{X}$. Loughran, Skorobogatov and Smeets later proved in \cite{LSS} that the \emph{if} of Denef's result may be replaced by \emph{if and only if} upon replacing ``split'' by ``pseudo-split''. So, to prove that (for every $n \geq 3$) $f'_n$ is not arithmetically surjective, it suffices to provide a birational modification $\widetilde{f'_n}:\widetilde{C'_n} \to \widetilde{X}$ of $f'_n$ and an irreducible divisor $D \subseteq \widetilde{X}$ such that $\widetilde{f'_n}$ does not have a pseudo-split fiber above the generic point of $D$.

In particular, using \cite[Thm.~1.4]{LSS}, Proposition \ref{Conditionbdoesnothold} follows immediately from the following lemma.

\begin{lemma}\label{MainLemmaCounterexample}
 Let $X$ and $P$ be as in Proposition \ref{Conditionbdoesnothold}. Let $\nu:\widetilde{X} \to X$ be the blowup of $X$ at $P$, and let $E \subseteq \widetilde{X}$ be the exceptional divisor $\nu^{-1}(P)$. If $\widetilde{f'_n}:\widetilde{C'_n} \to \widetilde{X}$ is any birational modification of $f'_n$ whose codomain is the blowup $\widetilde{X}$, then the fiber of $\widetilde{f'_n}$ above the generic point of $E$ is not pseudo-split.
\end{lemma}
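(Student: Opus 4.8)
The plan is to compute the fiber of $\widetilde{f'_n}$ over the generic point $\eta(E)$ of the exceptional divisor $E$ explicitly enough to identify its irreducible components and their multiplicities, then exhibit a Galois element fixing no multiplicity-one component. Since the formation of this fiber is insensitive to birational modifications of the source (by Nishimura's lemma, exactly as in the Note after Definition \ref{def:splitred} and in the proof of Proposition \ref{Induction2}), it suffices to work with a convenient model; I would work directly with $C_n$ and take the strict transform / base change under $\nu:\widetilde{X}\to X$, reducing to understanding the reduction of the generic fiber of $f_n$ at the divisorial DVR $R=\calO_{\widetilde X, E}$. The residue field $\kappa(\eta(E))$ is the function field of $E\cong\P^1_{\kappa(P)}$, so the relevant object is a $\kappa(\eta(E))$-variety obtained by iterating the conic-fibration fiber products starting from the point $P$ sitting at the singular point of a non-split singular fiber of $\pi_1$ (for $n$ odd) or $\pi_2$ (for $n$ even). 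The key local input is that blowing up the singular point $P$ of a non-split conic separates the two conjugate components of the singular fiber, and the exceptional divisor $E$ meets the strict transform of each component in a single point, these two points being conjugate over the quadratic extension $L/\kappa(P)$ splitting the fiber.

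The key steps, in order: (1) Describe $C_n$ via the explicit iterated-fiber-product formula of Remark \ref{RmkExplicitCn}, and track which factor the point $P$ lives in. (2) Pass to the generic point of $E$: the fiber of $f_n$ over $\eta(E)$ decomposes according to a decomposition — label it \eqref{Decomposition} as promised in the $n=3$ remark — recording that over $\eta(E)$ the ``first'' conic fibration ($\pi_1$ if $n$ odd, $\pi_2$ if $n$ even) contributes the two conjugate lines through $P$ meeting $E$, each a geometrically integral component but defined only over $L$. (3) Iterate: base-change each of these two $L$-rational components through the \emph{other} conic fibration, which by hypothesis (2) of Proposition \ref{Conditionbdoesnothold} is \emph{also} non-split at $P$; so each branch again splits into two conjugate pieces, and continuing for all $n\ge 3$ steps one sees that every multiplicity-one irreducible component of the geometric fiber is defined over a field strictly larger than $\kappa(\eta(E))$ and is moved by the Galois action. (4) Identify the Galois element: choose $\gamma\in\Gamma_{\kappa(\eta(E))}$ restricting to the nontrivial element of $\Gal(L\cdot\kappa(\eta(E))/\kappa(\eta(E)))$; by the non-splitness of both fibers at $P$ one checks that $\gamma$ swaps the two conjugate components at the first iteration, hence fixes no multiplicity-one component of the whole geometric fiber. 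Then invoke \cite[Thm.~1.4]{LSS}.

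I expect the main obstacle to be step (2)–(3): carrying out the local analysis of how the iterated fiber products behave after blowing up $P$, and in particular verifying that \emph{no} component acquires the structure of a geometrically integral $\kappa(\eta(E))$-scheme — one must rule out ``accidental'' splitting coming from the interaction of the two branch loci or from the point $P_0$, and one must check the multiplicities are all one so that pseudo-splitness is genuinely governed by the Galois orbits of these components. The decomposition \eqref{Decomposition} will need to be stated with enough care that the components over $E$ are seen to be conics (or products of conics) over an étale $\kappa(\eta(E))$-algebra that is a nontrivial iterated quadratic extension; once that is in hand, the conclusion that the specified $\gamma$ fixes no multiplicity-one component is formal. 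A minor technical point to handle along the way is the generality of $P_0$: the argument should be arranged so that the non-pseudo-splitness at $\eta(E)$ does not depend on $P_0$ lying outside some thin set, i.e.\ the obstruction at $E$ is robust — unlike the $n=3$ situation discussed in the remark, here both fibrations being non-split at $P$ forces the failure unconditionally.
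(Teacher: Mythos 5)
Your overall strategy (pass to the explicit iterated fiber product over the blowup, decompose the fiber over the generic point $\eta$ of $E$ into branches through the strict transforms $D_i$ of the non-split fibers $\pi_i^{-1}(\pi_i(P))$ and through $E$, then exhibit a Galois element fixing nothing, and conclude by \cite[Thm.~1.4]{LSS}) is the same as the paper's, but there are two genuine gaps. The main one is the reduction to the ``convenient model''. The statement concerns the fiber of an \emph{arbitrary} modification $\widetilde{f'_n}:\widetilde{C'_n}\to\widetilde{X}$, whose source is smooth; model-independence (\cite[Prop.~2.12]{LSS}) lets you fix one such modification, but the crude fiber product $\widetilde{X}\times_{\P^1}\cdots\times_{\P^1}P_0$ you propose to compute with is \emph{singular}, so it is not itself an admissible model, and Nishimura's lemma does not bridge the gap: a desingularization can introduce new multiplicity-one components in the fiber over $\eta$ (exceptional divisors dominating $E$), which could a priori be geometrically integral and make the fiber of the smooth model pseudo-split even though the crude fiber is not. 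Listing the components and multiplicities of the crude fiber, as in your steps (1)--(4), therefore does not control the fiber you actually need to control. The paper circumvents this by proving the \emph{stronger} statement that the crude fiber contains no pseudo-split $\kappa(\eta)$-subscheme at all, which (unlike pseudo-splitness itself) is inherited along any morphism (Lemma \ref{LemUinUQS}), and by recasting it field-theoretically (Lemma \ref{LemWPS}: a pseudo-split subscheme yields $F$-points with prescribed algebraic closure, but all branches map to $D_1$ or $D_2$, which have no such $F$-points). Some such strengthening-plus-functoriality step is indispensable in your write-up.

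The second gap is in your step (4) and in the description of the iteration. After a branch exits through a component of $D_1$, the subsequent fiber products are taken over \emph{moving} points of $\P^1$, so they do not ``split again into two conjugate pieces''; the actual decomposition \eqref{Decomposition} is indexed by how long a branch stays on $E$ (where $\widetilde{\pi_1},\widetilde{\pi_2}$ are constant equal to $t_1,t_2$) before exiting through $D_1$ or $D_2$, together with the observation that the all-$E$ branch is empty because $\pi_1(P_0)\neq t_1$ (here is where the hypotheses on $P_0$ enter, not a genericity assumption). Consequently there are branches mapping only to $D_2$ and never to $D_1$, so a $\gamma$ chosen nontrivial on a single quadratic extension $L$ — and the claim that swapping the two conjugate components ``at the first iteration'' suffices — does not work when the splitting fields $L_1$ of $D_1$ and $L_2$ of $D_2$ differ: you must choose $\gamma$ restricting nontrivially to both $\Gal(L_1/K)$ and $\Gal(L_2/K)$ simultaneously (possible since both are quadratic), and then lift it to $\Gamma_{\kappa(\eta)}$.
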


\begin{remark}
As with Proposition \ref{Conditionbdoesnothold}, Lemma \ref{MainLemmaCounterexample} may be generalized to the case $P \not\in X\left(K\right)$. In this case, one should instead take the simultaneous blowup of all conjugates of $P$, i.e.\ the blowup at the closed $K$-point associated to $P$.
\end{remark}

\subsection{Proof of Lemma \ref{MainLemmaCounterexample}}

Note that, by \cite[Prop.~2.12]{LSS}, it suffices to prove the claim for one modification. We start by constructing a modification which is particularly easy to describe.

We denote by $\widetilde{\pi_i}: \widetilde{X} \to \P^1$ the composition $\pi_i\circ \nu$ for $i=1,2$. We warn the reader that these morphisms are no longer conic fibrations, as the fiber of $\widetilde{\pi_i}$ at $\pi_i\left(P\right)$ has three geometrically connected components (two coming as proper transforms from the ones on $X$ plus the exceptional divisor).

Note that, in Definition \ref{DefCn}, we never used the hypothesis $\pi_1$ and $\pi_2$ are conic fibrations. Since this is the only hypothesis from that setting that does not hold for the $\widetilde{\pi_i}$, we may define varieties $\widetilde{C}_n$ as in Definition \ref{DefCn} but with $\widetilde{\pi_i}$ and $X$ replacing $\pi_i$ and $X$ respectively. Moreover, note that Proposition \ref{PropGeomint} made use only of the fact that the geometric generic fiber of each $\pi_i$ is rational, which also holds for $\widetilde{\pi_i}$. We deduce that, for each $n \geq 1$, $\widetilde{C}_n$ is a geometrically integral variety of dimension $n$. Finally, note that we may inductively define natural morphisms $\widetilde{C}_n \to C_n$, and that these morphisms are birational. In analogy with the notation of Section \ref{Sec3}, we define $\widetilde{C}'_n$ to be a desingularization of $\widetilde{C}_n$. As a visual aid for the reader (and the authors), let us draw the following commutative diagram ($n \geq 0$):
\[
\begin{tikzcd}
\widetilde{f}_n': & \widetilde{C}_n' \arrow[d, "\text{bir}", dashed] \arrow[r, "\text{desing}"] & \widetilde{C}_n \arrow[r, "\widetilde{f}_n"] \arrow[d, "\text{bir}"] & \widetilde{X} \arrow[d, "\text{blowup at } p"] \\
f'_n:             & {C}_n' \arrow[r, "\text{desing}"]                                           & {C}_n \arrow[r, "f_n"]                                               & X                                             
\end{tikzcd}
\]
where ``bir'' stands for birational and ``desing'' for desingularization, and $\widetilde{f}_n'$ is defined as the composition on the first row. As is clear from the diagram, the morphism $\widetilde{C}_n' \to \widetilde{X}$ is a birational modification of $C'_n \to X$. In particular, by \cite[Prop.~2.12]{LSS}, to prove the lemma it suffices to show that the fiber $(\widetilde{f}_n')^{-1}(\eta)$ is not pseudo-split, where $\eta$ denotes the generic point of the exceptional divisor $E$ of $\widetilde{X} \to X$.

To prove that $(\widetilde{f}_n')^{-1}(\eta)$ is not pseudo-split, we are actually going to prove that it does not contain \emph{any} pseudo-split $k(\eta)$-subscheme. Although this looks more difficult, we have the following lemma.

\begin{lemma}\label{LemUinUQS}
Let $f:X \to Y$ be a morphism of $k$-schemes. Assume that $X$ has a pseudo-split $k$-subscheme, then so does $Y$.
\end{lemma}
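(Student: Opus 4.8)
The plan is to obtain the required subscheme of $Y$ as the scheme-theoretic image of a pseudo-split subscheme of $X$, after a harmless reduction, and the delicate point will be a Galois-equivariance issue caused by the image of a component being non-maximal.

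First I would reduce to the case of a reduced subscheme. Let $Z\subseteq X$ be pseudo-split. Since $\Char k=0$ and $k$ is perfect, one has $Z_{\mathrm{red}}\times_k\kbar=(Z\times_k\kbar)_{\mathrm{red}}$, so $Z$ and $Z_{\mathrm{red}}$ have the same irreducible components over $\kbar$ with the same $\Gamma_k$-action, and in $(Z_{\mathrm{red}})\times_k\kbar$ every component has multiplicity one; hence $Z_{\mathrm{red}}$ is again pseudo-split, and I may replace $Z$ by it and assume $Z$ reduced. Now let $Z'\subseteq Y$ be the scheme-theoretic image of $f|_Z$; because $Z$ is reduced, $Z'$ coincides with the reduced closure $\overline{f(Z)}$, and because $\kbar/k$ is flat (and $f|_Z$ quasi-compact, which holds in the situations of interest) the formation of $Z'$ commutes with $-\times_k\kbar$. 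Writing $\{W_i\}_i$ for the (multiplicity-one) irreducible components of $Z_{\kbar}$, it follows that $Z'\times_k\kbar$ is the reduced subscheme of $Y_{\kbar}$ with support $\bigcup_i\overline{f(W_i)}$, whose irreducible components are exactly the maximal elements, under inclusion, of the finite set $\{\overline{f(W_i)}\}_i$, each of multiplicity one.

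It remains to verify that $Z'$ is pseudo-split, and this is where I expect the real work to lie. Since $f$ is defined over $k$, the group $\Gamma_k$ permutes $\{\overline{f(W_i)}\}_i$ compatibly with its permutation of $\{W_i\}_i$, hence it permutes the set of components of $Z'\times_k\kbar$. Given $\gamma\in\Gamma_k$, pseudo-splitness of $Z$ furnishes a component $W$ of $Z_{\kbar}$ with $\gamma W=W$, so $\overline{f(W)}$ is a $\gamma$-stable irreducible closed subset of $Z'\times_k\kbar$; if it happens to be maximal, it is a $\gamma$-stable component and we are done. The obstacle is the case where $\overline{f(W)}$ is properly contained in a component of $Z'\times_k\kbar$ while $\gamma$ cyclically permutes the components containing it, so that a bare poset-with-group-action argument does not suffice. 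To get around this I would use the additional structure that every component of $Z'\times_k\kbar$ is itself of the form $\overline{f(W')}$ with $W'$ a component of $Z_{\kbar}$, together with one of two devices: either take $Z$ minimal among reduced pseudo-split $k$-subschemes of $X$, which restricts the possible inclusions $\overline{f(W_i)}\subseteq\overline{f(W_j)}$, or induct on $\dim Z'$, passing from $Z$ to the $k$-subscheme cut out by those $W_i$ with $\dim\overline{f(W_i)}$ maximal — so that all image-closures have equal dimension and ``maximal'' becomes automatic. I expect pinning down the right inductive set-up — guaranteeing that whichever such reduction one performs still leaves a pseudo-split subscheme upstairs, equivalently that the components needed to fix every $\gamma$ survive the reduction — to be the crux of the argument.
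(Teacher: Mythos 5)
Your preparatory steps are fine (reduction to $Z$ reduced, the identification of the components of $Z'=\overline{f(Z)}$ with the maximal elements of $\{\overline{f(W_i)}\}$, and the $\Gamma_k$-equivariance of that picture), but the proof stops exactly where the content of the lemma lies: you never produce, for every $\gamma\in\Gamma_k$ simultaneously, a $\gamma$-fixed multiplicity-one component of a single subscheme of $Y$. The obstacle you flag is genuine: the scheme-theoretic image of a pseudo-split subscheme need not be pseudo-split. For instance, if $Z$ is the disjoint union of a $k$-point and a conjugate pair of lines, and $f$ sends the $k$-point into the intersection of the images of the two lines, then $\overline{f(Z)}$ is the conjugate line pair, which is not pseudo-split: the image of the everywhere-fixed component is swallowed by the singular stratum and is invisible to the component set of the image. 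Neither of your two devices is carried out, and neither obviously works. Dropping the components with non-maximal image dimension can drop exactly the components some $\gamma$ needs: take $Z$ with three $k$-irreducible pieces whose geometric components are conjugate over $\mathbb{Q}(\sqrt{2})$, $\mathbb{Q}(\sqrt{3})$, $\mathbb{Q}(\sqrt{6})$ respectively (no union of a proper subset of these pieces is pseudo-split), and let the $\mathbb{Q}(\sqrt{6})$-pair map into the intersection of the two conjugate curves onto which the $\mathbb{Q}(\sqrt{2})$-pair maps; then your dimension reduction loses pseudo-splitness for every $\gamma$ fixing only $\sqrt{6}$, and minimality of $Z$ upstairs does nothing visible to forbid this configuration (you assert it ``restricts the possible inclusions'' but give no argument). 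Note also that in such examples the pseudo-split subscheme of $Y$ one must exhibit is not a union of components of $\overline{f(Z)}$ at all, but a lower-dimensional stratum, so an argument confined to the components of the image of your chosen $Z$ cannot close the gap by itself.

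The paper's proof takes a different route that sidesteps the maximality problem rather than confronting it. One reduces to $X$ pseudo-split and $f$ dominant, replaces $X$ by its smooth locus (still pseudo-split, since multiplicity-one components are generically smooth in characteristic zero) and $Y$ by a smooth open subscheme of $Y_{\mathrm{red}}$; for smooth schemes, irreducible components coincide with connected components and automatically have multiplicity one, so the morphism induces a canonical $\Gamma_k$-equivariant map on geometric connected components, i.e.\ a $k$-morphism $\Spec A_X \to \Spec A_Y$ in the notation of \cite[\S 2.2]{LSS}, and a $\gamma$-fixed point upstairs maps to a $\gamma$-fixed point downstairs. In other words, the Galois data is extracted from $\pi_0$ of smooth models, not from closures of images inside $Y$, where the containments you worry about can occur; note in particular that the lemma only asks for \emph{some} pseudo-split subscheme of $Y$ (here an open of $Y_{\mathrm{red}}$), not for pseudo-splitness of the image of $Z$. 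To complete your version you would need an ingredient of this kind (or a genuine induction that descends into the stratum, e.g.\ the singular locus of $Z'$, where the images of the fixed components land), and as it stands that ingredient is missing.
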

\begin{proof}
We may assume, without loss of generality, that $X$ is pseudo-split and $f$ is dominant. Since $X$ is pseudo-split, so is $X^{sm}$, by definition, so we may also assume without loss of generality that $X$ is smooth. Moreover, noticing that $X \to Y$ factors through $Y_{red}$ (since $X$ is smooth), after substituting $Y$ with its reduced subscheme $Y_{red}$, we may assume that it is reduced. Finally, substituting $Y$ with an open subscheme, we may assume that it is smooth as well (recall that $\Char k=0$). Hence, the morphism $X \to Y$ induces now a morphism $\Spec A_X \to \Spec A_Y$ (for the definition of the $k$-finite \'etale algebras $A_X$ and $A_Y$, see \cite[\S2.2]{LSS}). According to \cite[Defs.~2.2,~2.10]{LSS}, that $X$ is pseudo-split means that each element of $\Gamma_k$ fixes at least one point of $\Spec A_X(\overline{k})$. The existence of a $k$-morphism $\Spec A_X \to \Spec A_Y$ then immediately implies that the same holds for $\Spec A_Y$, i.e., $Y$ is pseudo-split.
\end{proof}

\begin{remark}
Note that Lemma \ref{LemUinUQS} is false if one replaces the property of ``having a pseudo-split $k$-subscheme'' with the property of ``being pseudo-split'', as the following example shows: take $f$ to be the embedding of the unique $k$-point $P$ of a non-split reduced singular conic $C$ (or, in other words, two geometric lines in the plane switched by Galois action), e.g.\ $P = [0,0,1]$, $C: x^2 - 2y^2 = 0 \subset \mathbb{P}^2$ over $k = \mathbb{Q}$.
\end{remark}

Note that there is an obvious $k(\eta)$-morphism $(\widetilde{f}_n')^{-1}(\eta) \to \widetilde{f}_n^{-1}(\eta)$. Hence, by Lemma \ref{LemUinUQS}, we see that to prove that 
$(\widetilde{f}_n')^{-1}(\eta)$ is not $k(\eta)$-pseudo-split, it is enough to prove  the following lemma, which concludes the proof of Lemma \ref{MainLemmaCounterexample}.
\begin{lemma}\label{LemNotcontain}
The $k(\eta)$-scheme $\widetilde{f}_n^{-1}(\eta)$ does not contain any $k(\eta)$-pseudo-split subscheme.
\end{lemma}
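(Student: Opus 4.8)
The plan is to analyze the structure of $\widetilde{f}_n^{-1}(\eta)$ explicitly, using the iterated fiber-product description of $\widetilde{C}_n$ together with the local geometry near the bad point $P$, and to show that for \emph{every} element $\gamma$ of the absolute Galois group of $k(\eta)$ there is no multiplicity-$1$ irreducible component of the geometric fiber fixed by $\gamma$. The key observation is that the generic point $\eta$ of the exceptional divisor $E$ maps under $\widetilde{\pi_i}$ to $\pi_i(P)\in\P^1$ for both $i=1,2$ (since $E$ lies over $P$, which lies on both fibers $\pi_i^{-1}(\pi_i(P))$). Consequently, when we form $\widetilde{C}_n$ as the iterated fiber product over $\P^1$ alternating between $\widetilde{\pi_1}$ and $\widetilde{\pi_2}$, the fiber over $\eta$ decomposes into a product of fibers of the $\widetilde{\pi_i}$ over the points $\pi_i(P)$, base-changed to $k(\eta)$. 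Each such fiber $\widetilde{\pi_i}^{-1}(\pi_i(P))$ consists of three components over $\kbar$: the two geometric lines of the (non-split) conic $\pi_i^{-1}(\pi_i(P))$, which are swapped by Galois, plus the exceptional divisor.

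First I would write down, along the lines of the decomposition labelled \eqref{Decomposition} referenced elsewhere in the paper, the explicit description of $\widetilde{f}_n^{-1}(\eta)$ as a fiber product over $\Spec k(\eta)$ of copies of the schemes $\widetilde{\pi_1}^{-1}(\pi_1(P))_{k(\eta)}$ and $\widetilde{\pi_2}^{-1}(\pi_2(P))_{k(\eta)}$ (suitably matched at the exceptional divisor to take into account the section and the initial point $C_0$). Next I would pass to $\kbar=\overline{k(\eta)}$ and identify the irreducible components of the geometric fiber: each factor contributes a choice among "left line", "right line", or "a component of $E$", and an irreducible component of the product corresponds (generically) to a compatible tuple of such choices; crucially, any component involving a line from a non-split conic in at least one factor is \emph{not} Galois-stable, since the Galois action swaps the two lines, while the components that only ever select pieces of exceptional divisors turn out to be the ones whose multiplicity I must control. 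The heart of the argument is then to show that every component of $\widetilde{f}_n^{-1}(\eta)_{\kbar}$ that \emph{is} fixed by the relevant Galois element has multiplicity strictly greater than $1$ — this is exactly where the non-splitness of both $\pi_1^{-1}(\pi_1(P))$ and $\pi_2^{-1}(\pi_2(P))$ enters, forcing the "stable" components to arise only from non-reduced intersections (fiber products of a conic with itself along the same map produce a diagonal component with multiplicity coming from the degree-two ramification, mimicking the $n=3$ computation sketched in the remark).

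I would organize the computation inductively in $n\geq 3$: handle $n=3$ directly (this is the base case already alluded to in the paper, where $\widetilde{C}_3$ involves $\widetilde{\pi_1}^{-1}(\pi_1(P)) \times_{\widetilde{\pi_2}} \widetilde{\pi_2}^{-1}(\pi_2(P_0))$ and one checks the fiber over $\eta$ by hand), and then show that passing from $\widetilde{C}_n$ to $\widetilde{C}_{n+1}$ — which amounts to base-changing the $\eta$-fiber by one more copy of $\widetilde{\pi_i}^{-1}(\pi_i(P))_{k(\eta)}$ along the appropriate map — cannot create a new Galois-stable multiplicity-$1$ component: either the new factor forces a line of a non-split conic (killing Galois-stability), or it forces a diagonal-type component whose multiplicity is at least $2$, or it maps into the exceptional part in which case the multiplicity from the earlier stage is inherited. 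A clean way to package the inductive step is via Lemma~\ref{LemUinUQS}: since $\widetilde{f}_{n}^{-1}(\eta)$ admits a $k(\eta)$-morphism to $\widetilde{f}_{n-1}^{-1}(\eta)$ obtained by projecting away the last factor, any pseudo-split $k(\eta)$-subscheme of $\widetilde{f}_n^{-1}(\eta)$ would produce one in $\widetilde{f}_{n-1}^{-1}(\eta)$, reducing everything to the case $n=3$ (or even $n=2$ if one is careful, though the relevant content is at $n=3$).

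The main obstacle I anticipate is the bookkeeping for the multiplicities of the "exceptional-divisor-only" components: over $\kbar$ the exceptional divisor $E\cong\P^1$ meets each of the two lines of $\pi_i^{-1}(\pi_i(P))$ transversally in one point, so a component of the iterated fiber product that lives over $E$ in one factor and must be matched to a line in an adjacent factor sits over a \emph{point} of that line, and one has to chase through whether the resulting component is reduced and whether it is Galois-stable — the non-sharing of singular points (condition (b)) is precisely what fails here, and I expect the careful statement to be that each Galois element either moves every candidate component or fixes only components supported on loci where two copies of the same map $\widetilde{\pi_i}$ are fiber-producted, which carry multiplicity $\geq 2$ by the Riemann–Hurwitz/ramification count already used in the proof of Lemma~\ref{Lem:gigf}. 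Once that local multiplicity claim is pinned down, the conclusion that $\widetilde{f}_n^{-1}(\eta)$ contains no pseudo-split $k(\eta)$-subscheme follows formally from the definition of pseudo-split together with Lemma~\ref{LemUinUQS}.
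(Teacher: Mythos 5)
Your overall starting point (the explicit fiber-product description of $\widetilde{C}_n$ and a decomposition of the fiber over $E$ along the lines of \eqref{Decomposition}) matches the paper, but the core of your argument does not prove the statement, for two reasons. First, the lemma asserts that $\widetilde{f}_n^{-1}(\eta)$ contains no pseudo-split $k(\eta)$-\emph{subscheme}, which is strictly stronger than the fiber itself failing to be pseudo-split: a non-split singular conic is not pseudo-split, yet its singular rational point is a pseudo-split subscheme (this is exactly the paper's cautionary example after Lemma \ref{LemUinUQS}). Your plan — exhibit a Galois element fixing no multiplicity-one geometric component of the fiber — only rules out pseudo-splitness of the fiber, and no multiplicity bookkeeping can close this gap, since any component of multiplicity $\geq 2$ contains its reduction as a subscheme, and that reduction could perfectly well be pseudo-split (even geometrically integral). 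The paper instead controls \emph{all} subschemes at once via Lemma \ref{LemWPS}: a pseudo-split subscheme would force the existence of an $F$-point with prescribed field of constants $\overline{k}^{\langle\gamma\rangle}$, and one then shows directly that every piece of $\widetilde{f}_n^{-1}(E)$ maps to $D_1$ or $D_2$, so any point has residue field containing $L_1$ or $L_2$; choosing $\gamma$ nontrivial on both $L_1$ and $L_2$ kills all such $F$-points, for every $n$ simultaneously and with all multiplicities discarded (the paper deliberately works with reduced structures).

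Second, your anticipated mechanism for the ``exceptional-divisor-only'' components is wrong in substance: the would-be Galois-stable piece $E\times_{t_1}E\times_{t_2}\cdots\times E\times_{\P^1}P_0$ is not a multiplicity-$\geq 2$ component to be discounted by a Riemann--Hurwitz count — it is \emph{empty}, precisely because the last factor is $P_0$ and $\pi_1(P_0)\neq t_1$ ($P_0$ lies on a smooth fiber of $\pi_1$, while the non-split fiber through $P$ has no smooth $K$-point and $P_0\neq P$). This role of $P_0$ is the linchpin: if that piece were nonempty, its reduction would be geometrically integral (a product of copies of $E\cong\P^1$), hence a split subscheme, and the lemma would simply be false; so an argument that tolerates such a component provided its multiplicity is $\geq 2$ cannot succeed. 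A smaller issue: your inductive reduction to $n=3$ via Lemma \ref{LemUinUQS} presupposes a morphism $\widetilde{f}_n^{-1}(\eta)\to\widetilde{f}_{n-1}^{-1}(\eta)$ obtained by forgetting a factor, but the natural projection $a_n$ sends $\widetilde{f}_n^{-1}(\eta)$ into the fiber of $\widetilde{\pi}_i\circ\widetilde{f}_{n-1}$ over $t_i$, not into the fiber of $\widetilde{f}_{n-1}$ over $\eta$, so that reduction is not available as stated; the paper needs no such induction, handling all $n$ uniformly through the decomposition and the field-theoretic criterion (with a lift $\gamma'$ of $\gamma$ to $\Gamma_{k(\eta)}$ and the morphism $\widetilde{f}_n^{-1}(\eta)\to\widetilde{f}_n^{-1}(E)$).
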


The advantage afforded to us by the preceding lemma is that we may hope to describe the fiber $\widetilde{f}_n^{-1}(\eta)$ explicitly. In fact, this is essentially what shall be done in the proof of Lemma \ref{LemNotcontain} (we actually give an explicit description of $\widetilde{f}_n^{-1}(E)$).

\begin{proof}[Proof of Lemma \ref{LemNotcontain}]

Recall from Remark \ref{RmkExplicitCn}, that we have:
\begin{equation}\label{Cn}
\widetilde{C_n} =
    \begin{cases}
 \overbrace{\widetilde{X} \times_{\widetilde{\pi}_1,\widetilde{\pi}_1} \widetilde{X} \times_{\widetilde{\pi}_2,\widetilde{\pi}_2} \cdots \times_{\widetilde{\pi}_2,\widetilde{\pi}_2}\widetilde{X} }^{n} \times_{\widetilde{\pi}_1,f_0\circ \widetilde{\pi}_1} P_0 \ & \text{ if } n \text{ is odd }; \\
 \overbrace{\widetilde{X} \times_{\widetilde{\pi}_2,\widetilde{\pi}_2} \widetilde{X} \times_{\widetilde{\pi}_1,\widetilde{\pi}_1} \cdots \times_{\widetilde{\pi}_2,\widetilde{\pi}_2} \widetilde{X}}^n \times_{\widetilde{\pi}_1,f_0\circ \widetilde{\pi}_1} P_0 \ & \text{ if } n \text{ is even},
\end{cases}
\end{equation}

and that, under this identification, $\widetilde{f}_n:\widetilde{C_n} \to \widetilde{X}$ corresponds to the projection on the first factor.

From now on let us assume that $n$ is odd so that we fix the formula to use (the proof for even $n$ is identical), and let us fix such an $n$.

Let us write:
\[
\widetilde{C_n} = {\widetilde{X} \times_{\P^1} \widetilde{X} \times_{\P^1} \cdots \times_{\P^1} \widetilde{X} } \times_{\P^1} P_0,
\]
leaving it implicit that the morphisms that define the fibered products are those of \eqref{Cn}, and that the number of copies of $\widetilde{X}$ is $n$. 

For $i \in \{1,2\}$, define $D_i$ to be the strict transform of $\pi_i^{-1}(\pi_i(P))$. Note that $\widetilde{\pi}_i^{-1}(\pi_i(P))_{red}=D_i \cup E$. \footnote{Although this is irrelevant for our proof, as divisors, we have that $\widetilde{\pi}_i^{-1}(\pi_{i}(P))=D_i + 2E$. This is why we are about to choose to work with reduced schemes everywhere, so that we can get rid of the nilpotents at $E$, which we do not need to keep track of.} 

Since we only need to work with reduced schemes (indeed, any pseudo-split subscheme of $\widetilde{f}_n^{-1}(\eta)$ would necessarily be contained in $\widetilde{f}_n^{-1}(\eta)_{red}$), we make the following:

\begin{UnNot}
From now on in this proof, we will only work with reduced schemes. So, to simplify certain technical matters, all schemes will be identified with their associated reduced scheme. In particular, for a morphism $f:X \to Y$ and a subscheme $Z \subseteq Y$, $f^{-1}(Z)$ will actually denote the scheme $f^{-1}(Z)_{red}$. Analogously, for morphisms $X \to Z, Y \to Z$, the notation $X \times_Z Y$ will actually denote the scheme $(X \times_Z Y)_{red}$.
\end{UnNot}

As mentioned above, we will give an explicit expression for the fiber $\widetilde{f}_n^{-1}(\eta)$.  We claim that there is a ``decomposition'':
\begin{equation}\label{Decomposition}
    \begin{split}
     \widetilde{f}_n^{-1}(E)  = & E \times_{t_1} D_1 \times_{\P^1}  \widetilde{X} \times_{\P^1} \cdots \widetilde{X} \times_{\P^1} P_0 \ \cup \\
     &  E \times_{t_1} E \times_{t_2} D_2 \times_{\P^1} \cdots \widetilde{X} \times_{\P^1} P_0 \ \cup  \\
     & \cdots \ \cup  \\
    &E \times_{t_1} E \times_{t_2} E \times_{t_1} \cdots D_2 \times_{\P^1} P_0 \\
     \subseteq & {\widetilde{X} \times_{\P^1} \widetilde{X} \times_{\P^1} \cdots \times_{\P^1} \widetilde{X} } \times_{\P^1} P_0,
\end{split}
\end{equation}
where $t_1:=\pi_1(P), t_2:= \pi_2(P) \in \P^1(K)$.

To prove the claim, note that, by definition:
\begin{align*}
    \widetilde{f}_n^{-1}(E) = & E\times_{\P^1} \widetilde{X} \times_{\P^1} \cdots \times_{\P^1} \widetilde{X}  \times_{\P^1} P_0 \\
     \subseteq & {\widetilde{X} \times_{\P^1} \widetilde{X} \times_{\P^1} \cdots \times_{\P^1} \widetilde{X} } \times_{\P^1} P_0,
\end{align*}
However, the image of $E$ in $\P^1$ under $\widetilde{\pi_1}|_{E}$ (the first implied morphism on the left) is just the point $t_1 \cong \Spec k$, i.e., we have the following factorization:
\[
\begin{tikzcd}
E \arrow[r] \arrow[rr, "{\widetilde{\pi_1}|_E}", bend left] & t_1 \arrow[r] & {\P^1}
\end{tikzcd}.
\]
Hence, we have that $E\times_{\P^1} \widetilde{X} = E \times_{t_1} (t_1 \times_{\P^1} \widetilde{X}) = E\times_{t_1} \widetilde{\pi}_1^{-1}(t_1)$. Moreover,
remembering the fact that $\widetilde{\pi}_{1}^{-1}(t_1) = D_{1} \cup E$, we deduce that $E\times_{\P^1} \widetilde{X}  = E\times_{t_1} \widetilde{\pi}_1^{-1}(t_1) =E\times_{t_1} \widetilde{\pi}_1^{-1}(t_1)  =  (E\times_{t_1} D_1) \cup (E\times_{t_1} E)$.

Therefore we obtain:
\begin{align*}
    \widetilde{f}_n^{-1}(E)  = &E\times_{\P^1} \widetilde{X} \times_{\P^1} \cdots \times_{\P^1} \widetilde{X}  \times_{\P^1} P_0 \\
    = & E \times_{t_1} D_1 \times_{\P^1}  \widetilde{X} \times_{\P^1} \cdots \widetilde{X} \times_{\P^1} P_0  \ \cup\\
    &  E \times_{t_1} E \times_{\P^1} \widetilde{X} \times_{\P^1} \cdots \widetilde{X} \times_{\P^1} P_0  \\
    & \subseteq {\widetilde{X} \times_{\P^1} \widetilde{X} \times_{\P^1} \cdots \times_{\P^1} \widetilde{X} } \times_{\P^1} P_0,
\end{align*}
We may apply the same argument as before to deduce that $(E \times_{\widetilde{\pi_2},\widetilde{\pi_2}} \widetilde{X} ) = (E \times_{t_2} D_2) \cup (E\times_{t_2} E)$, and then, with an easy induction, deduce that:
\begin{align*}
    \widetilde{f}_n^{-1}(E)  = & E \times_{t_1} D_1 \times_{\P^1}  \widetilde{X} \times_{\P^1} \cdots \widetilde{X} \times_{\P^1} P_0  \ \cup\\
    &  E \times_{t_1} E \times_{t_2} D_2 \times_{\P^1} \cdots \widetilde{X} \times_{\P^1} P_0  \ \cup  \\
    & \cdots \ \cup  \\
    & E \times_{t_1} E \times_{t_2} E \times_{t_1} \cdots D_2 \times_{\P^1} P_0  \ \cup\\
    & E \times_{t_1} E \times_{t_2} E \times_{t_1} \cdots E \times_{\P^1} P_0 \\
    & \subseteq {\widetilde{X} \times_{\P^1} \widetilde{X} \times_{\P^1} \cdots \times_{\P^1} \widetilde{X} } \times_{\P^1} P_0.
\end{align*}
To prove the claim it suffices to prove now that $E \times_{t_1} E \times_{t_2} E \times_{t_1} \cdots E \times_{\P^1} P_0 = \emptyset$. However, this is immediate, because the image of $E$ in $\P^1$ under $\widetilde{\pi_1}|_{E}$ (the penultimate implied morphism from the right) is $t_1$, while the image of $P_0$ under ${\pi_1}|_{P_0}$ (the last implied morphism on the right) is $\pi_1(P_0)$ which is different from $t_1$, because $P_0 \in X(K)$ is different from $P$, and it cannot be a smooth point of $\pi_1^{-1}(t_1)=\pi_1^{-1}(\pi_1(P))$, because otherwise this fiber would be split over $K$, which it is not by hypothesis. This proves the claim.

\begin{remark}
The fact that $P_0$ is different from $P$ is not an arbitrary choice. Note that, if it were chosen to be $P$, then for each $n \geq 1$, the $k$-scheme $C_n$ formed as in Definition \ref{DefCn}, would be an integral, but not geometrically irreducible variety\footnote{The integrality follows from the fact that $C_1=\pi_1^{-1}(\pi_1(P))$ is integral, and then an induction argument, similar to the one of Lemma \ref{Lem:gigf}, can be used to show that $C_n$ is integral for all $n$. On the other hand, it cannot be geometrically irreducible as there is a dominant morphism $C_n \to C_1$, and the image of a geometrically irreducible variety is geometrically irreducible.}. Also, if one follows the intuition that $C_n$ should parametrize $K$-points obtained following the procedure of ``taking a $K$-point, then taking another $K$-point on its fiber, then switching fibrations'', we see how, starting from $P$, which is the only $K$-point on both $\pi_1^{-1}(\pi_1(P))$ and $\pi_2^{-1}(\pi_2(P))$, we are immediately stuck.
\end{remark}

Let us prove now that the $k$-scheme $\widetilde{f}_n^{-1}(E)$ does not contain any $k$-pseudo split subschemes. We will use the (contrapositive of the) following results.
\begin{lemma}\label{LemWPS}
Let $X$ be a $k$-scheme. If $X$ contains a pseudo-split $k$-subscheme, then, for each $\gamma \in \Gamma_k$, there exists an extension of fields $F/k$ such that the algebraic closure of $k$ in $F$ is (isomorphic, as a $k$-extension, to) $\overline{k}^{<\gamma>}$, and $X(F)\neq \emptyset$.
\end{lemma}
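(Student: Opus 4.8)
The plan is to argue by Galois descent, starting from the geometric component that the pseudo-split hypothesis furnishes. First I would make two harmless reductions. Since any $k$-subscheme $Y\subseteq X$ satisfies $Y(F)\subseteq X(F)$ for every extension $F/k$, we may replace $X$ by the pseudo-split $k$-subscheme it contains and so assume $X$ itself is pseudo-split; we may and do also assume $X$ is of finite type over $k$, as in our application. Next, the generic point of a multiplicity-$1$ irreducible component of $X\times_k\overline{k}$ has a field for its local ring, hence lies in the regular locus, which (over a field of characteristic $0$) is the smooth locus; since moreover $(X^{\mathrm{sm}})\times_k\overline{k}=(X\times_k\overline{k})^{\mathrm{sm}}$, the scheme $X^{\mathrm{sm}}$ is again pseudo-split, and $X^{\mathrm{sm}}(F)\subseteq X(F)$. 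Replacing $X$ by $X^{\mathrm{sm}}$, we may thus assume $X$ is smooth, so that $X\times_k\overline{k}$ is a finite disjoint union of smooth integral $\overline{k}$-varieties.

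Then, fixing $\gamma\in\Gamma_k$, pseudo-splitness provides an irreducible component $Z$ of $X\times_k\overline{k}$ with $\gamma(Z)=Z$. I would let $G_Z\leq\Gamma_k$ be the stabiliser of $Z$ — an open, hence closed, subgroup, as $\Gamma_k$ acts continuously on the finite set of components — and let $M_Z:=\overline{k}^{G_Z}$ be the corresponding finite subextension of $\overline{k}/k$. By the standard description of irreducible components under extension of the base field, $Z$ descends to a closed subscheme $Z_0\subseteq X\times_k M_Z$ that is geometrically integral over $M_Z$ and satisfies $Z_0\times_{M_Z}\overline{k}\cong Z$. Since $\gamma\in G_Z$ fixes $M_Z$ pointwise, we have $M_Z\subseteq M:=\overline{k}^{\langle\gamma\rangle}$.

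Finally I would set $F:=M\bigl(Z_0\times_{M_Z}M\bigr)$, the function field of the integral $M$-variety $Z_0\times_{M_Z}M$. This variety is geometrically integral over $M$ (a base change of one over $M_Z$), so $M$ is algebraically closed in $F$; as $M/k$ is algebraic, $M$ is then exactly the algebraic closure of $k$ in $F$, which is $\overline{k}^{\langle\gamma\rangle}$ as required. Moreover the composition of $k$-morphisms
\[
\Spec F \longrightarrow Z_0\times_{M_Z}M \longrightarrow Z_0 \hookrightarrow X\times_k M_Z \longrightarrow X
\]
is an $F$-point of $X$, whence $X(F)\neq\emptyset$, and the proof is complete.

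The argument is essentially formal once this set-up is arranged, so I do not anticipate a serious obstacle. The one point that must be handled with care — and the place where the ``multiplicity-$1$'' clause in the definition of pseudo-split is genuinely used — is the reduction to $X$ smooth, which is exactly what guarantees that the descended component $Z_0$ is geometrically integral (in particular geometrically reduced) over $M_Z$; were $Z$ permitted to carry multiplicity, $Z_0$ could fail to be geometrically reduced and then $F$ would not have $\overline{k}^{\langle\gamma\rangle}$ as its field of constants. So this reduction, rather than the descent itself, is the load-bearing step.
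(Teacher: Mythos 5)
Correct, and essentially the paper's own argument: the paper's one-line proof takes $F$ to be the function field of the multiplicity-one component fixed by $\gamma$, and your reduction to the smooth locus, descent of $Z$ to the finite field of definition $M_Z$, and base change to $M=\overline{k}^{\langle\gamma\rangle}$ is exactly the careful unwinding of that line (the restriction to finite-type $X$ being harmless for the paper's application). One quibble with your closing remark: multiplicity one is not genuinely load-bearing for this lemma, since the conclusion only asks for an $F$-point and so one may first replace $X$ by $X_{\mathrm{red}}$, or take $F$ to be the residue field at the image in $X \times_k M$ of the generic point of $Z$, neither of which sees multiplicities — it is only your particular route through geometrically integral descended components and their function fields that uses it.
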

\begin{proof}
Suppose that $Y \subset X$ is a pseudo-split $k$-subscheme. The proof follows immediately upon taking $F$ to be the function field of a multiplicity-one irreducible component of $Y$ fixed by $\gamma$.
\end{proof}

We are going to give an explicit $\gamma \in \Gamma_k$ which makes the condition of the lemma above fail.

The $k$-scheme $\widetilde{f}_n^{-1}(E)$ is a union of subvarieties each of which possess a morphism to either $D_1$ or $D_2$. Let now $L_1$ and $L_2$ be the splitting fields of $D_1$ and $D_2$, and let $\gamma \in \Gamma_k$ be an element such that it projects to the non-trivial element of both $\Gal(L_1/K)$ and $\Gal(L_2/K)$ (recall that $L_1$ and $L_2$ are quadratic extensions of $K$, so there is a unique non-trivial element in both those groups). Note that such an element always exists: it suffices to take any lift of the non-trivial element of $\Gal(L_1/K)$ if $L_1=L_2$, or to take any lift of the element $(1,1)$ of $\Z/2\Z \times \Z/2\Z \cong \Gal(L_1/K) \times \Gal(L_2/K) = \Gal(L_1L_2/K)$ if $L_1 \neq L_2$. 
 
 We have now that, for any extension $F$ of $k$ such that the algebraic closure of $k$ in $F$ is (isomorphic to) $\overline{k}^{<\gamma>}$, $D_1(F)=D_2(F)=\emptyset$. In fact, since we have morphisms $D_1 \to \Spec L_1$ and $D_2 \to \Spec L_2$, we have that, for each $k$-extension $F'$ such that $D_1(F') \neq \emptyset$ (resp.\ $D_2(F') \neq \emptyset$), $L_1 \subseteq F'$ (resp.\ $L_2 \subseteq F'$). However, $F \nsubseteq L_1$ and $F \nsubseteq L_2$, because otherwise we would have that one among $L_1$ or $L_2$ would be contained in $\overline{k}^{<\gamma>}$, which is impossible by our choice of $\gamma$, which restricts to a non-trivial involution of both $L_1$ and $L_2$.
 It follows that for any such $F$, $\widetilde{f}_n^{-1}(E)(F)=\emptyset$. In particular, by Lemma \ref{LemWPS},  this proves that $\widetilde{f}_n^{-1}(E)$ does not contain any pseudo-split $k$-subschemes, as wished (with the condition of the lemma failing for the specific $\gamma$ we constructed).

Finally, let us prove that $\widetilde{f}_n^{-1}(\eta)$ does not contain any $k(\eta)$-pseudo-split subschemes. 

Recall that $\eta$ is the generic point of $E$, which is isomorphic to $\P^1$. In particular, we have that $k(\eta) \cong k(t)$, where $t$ is a transcendental parameter. We have the following short exact sequence (which arises from the identification of the Galois group of the extension $\overline{k}(t)/k(t)$ with that of $\overline{k}/k$):
\[
1 \to \Gal (\overline{k(t)}/\overline{k}(t)) \to \Gamma_{k(t)} = \Gamma_{k(\eta)} \to \Gamma_k \to 1.
\]
Let now $\gamma'\in \Gamma_{k(\eta)}$ be any lift of $\gamma\in \Gamma_{k}$. We claim that, for any $k(\eta)$-extension $F$ such that the algebraic closure of $k(\eta)$ in $F$ is (isomorphic to) $\overline{k(\eta)}^{<\gamma'>}$, we have that
$(\widetilde{f}_n^{-1}(\eta))(F)= \emptyset$. Note that, by Lemma \ref{LemWPS}, this would conclude the proof of the fact that $\widetilde{f}_n^{-1}(\eta)$ does not contain any $k(\eta)$-pseudo-split subschemes, and, hence, the proof of Lemma \ref{MainLemmaCounterexample}.

To prove the claim, assume that there is such an $F$ with $(\widetilde{f}_n^{-1}(\eta))(F) \neq \emptyset$. Note that there is a natural morphism of $k$-schemes $\widetilde{f}_n^{-1}(\eta) \to \widetilde{f}_n^{-1}(E)$, so we deduce that there exist an $F$-point in $\widetilde{f}_n^{-1}(E)$. Noting that the algebraic closure of $k$ in $F$ is isomorphic to the algebraic closure of $k$ in $\overline{k(\eta)}^{<\gamma'>}$, which is equal to $\overline{k}^{\gamma'}= \overline{k}^{\gamma}$, we reach our contradiction, since we already proved that, for such an $F$, $\widetilde{f}_n^{-1}(E)$ has no $F$-points.
\end{proof}

\subsection*{Acknowledgements}
We thank Daniel Loughran, Alexei Skorobogatov and Rosa Winter for useful comments and suggestions on early drafts, and we thank Gregory Sankaran and David Bourqui for feedback on the version appearing in the second author's PhD thesis at the University of Bath. We are grateful to the anonymous referees for their reports.
The first author was supported initially by Scuola Normale Superiore and Université Paris-Saclay (as a PhD student), and then by the Max Planck Institute at Bonn.
The second author was supported initially by an EPSRC studentship at the University of Bath and later by the Heilbronn Institute for Mathematical Research.

\end{document}